\theoremstyle{plain}
\newtheorem{thm}{Theorem}[section]
\newtheorem{thmx}{Theorem}
\newtheorem{lemma}[thm]{Lemma}
\newtheorem{corollary}[thm]{Corollary}
\newtoks\prt
\theoremstyle{definition}
\newtheorem{definition}[thm]{Definition}
\def\eqn#1$$#2$${\begin{equation}\label#1#2\end{equation}}
\numberwithin{equation}{section}
\definecolor{ffqqqq}{rgb}{1.,0.,0.}
\definecolor{xfqqff}{rgb}{0.4980392156862745,0.,1.}
\definecolor{qqqqff}{rgb}{0.,0.,1.}
\def\loc{\operatorname{loc}}
\def\co{\operatorname{co}}
\def\T{\mathcal{T}}
\def\phi{\varphi}
\def\epsilon{\varepsilon}
\def\er{\mathbb R}
\def\id{\operatorname{id}}
\def\sgn{\operatorname{sgn}}
\def\rn{\mathbb R^n}
\def\sgn{\operatorname{sgn}}
\newtoks\by
\newtoks\paper
\newtoks\book
\newtoks\jour
\newtoks\yr
\newtoks\pages
\newtoks\vol
\newtoks\publ
\def\ota{{\hbox\vol{???}}}
\def\cLear{\by=\ota\paper=\ota\book=\ota\jour=\ota\yr=\ota
	\pages=\ota\vol=\ota\publ=\ota}
\def\endpaper{\the\by, {\the\paper},
	\textit{\the\jour} \textbf{\the\vol} (\the\yr), \the\pages.\cLear}
\def\endbook{\the\by, \textit{\the\book}, \the\publ.\cLear}
\def\endprep{\the\by, \textit{\the\paper}, \the\jour.\cLear}
\def\endyearprep{\the\by, \textit{\the\paper}, \the\jour, (\the\yr).\cLear}
\def\name#1#2{#2 #1}
\def\nom{ \rm no. }
\newcommand{\labeltext}[2]{%
	\@bsphack
	\def\@currentlabel{#1}{\label{#2}}%
	\@esphack
}
\def\step#1#2#3{\par \noindent{{\\ \bf Step~\labeltext{#1}{#3}#1. }{\bf #2. }}}
\begin{document}
	
	\title[Approximation of p-w affine homeomorphisms]{Diffeomorphic approximation of piecewise affine homeomorphisms}
	
	\author[D. Campbell]{Daniel Campbell}
	\address{Department of Mathematical Analysis, Charles University, So\-ko\-lovsk\'a 83, 186~00 Prague 8, Czech Republic}
	\email{\tt campbell@karlin.mff.cuni.cz}
	
	\author[L. D'Onofrio]{Luigi D'Onofrio}
	\address{Dipartimento di Scienze e Tecnologie Universita' di Napoli Parthenope, ISOLA C4, 80100 Napoli, Italy}
	\email{\tt luigi.donofrio@uniparthenope.it}
	
	\author[T. Vítek]{Tomáš Vítek}
	\address{Mathematical Institute, Faculty of Mathematics and Physics, Charles University, So\-ko\-lovsk\'a 83, 186~00 Prague 8, Czech Republic}
	\email{\tt tomas.vitek.university@gmail.com}

	\thanks{The first author was supported by the grant GA\v{C}R P201/24-10505S and by the Ministry of Education, Youth and Sport of
		the Czech Republic grant number LL2105 CONTACT. The second author was supported by GNAMPA and by the European Union - NextGenerationEU within the framework of PNRR  Mission 4 - ``PRIN 2022'' - grant number 2022BCFHN2 - Advanced theoretical aspects in PDEs and their applications CUP I53D23002300006, the third author was supported by the Charles University Research Center program No. UNCE/24/SCI/022 and the project SVV-2025-260837-.}

	\subjclass[2010]{Primary 41A29; Secondary 41A30, 57Q55}
	\keywords{diffeomorphic approximation, Sobolev approximation, piecewise affine homeomorphisms}
	
	\begin{abstract}
		Given any $f$, a locally finitely piecewise affine homeomorphism of $\Omega \subset \mathbb{R}^d$ onto $\Delta \subset \mathbb{R}^d$ (for $d=3, 4$) such that $f\in W^{1,p}(\Omega, \mathbb{R}^d)$ and $f^{-1}\in W^{1,q}(\Delta, \mathbb{R}^d)$, $1\leq p ,q < \infty$ and any $\epsilon >0$ we construct a diffeomorphism $\tilde{f}$ such that 
		$$\|f-\tilde{f}\|_{W^{1,p}(\Omega,\mathbb{R}^d)} + \|f^{-1}-\tilde{f}^{-1}\|_{W^{1,q}(\Delta,\mathbb{R}^d)} < \epsilon.$$
	\end{abstract}
	
	\maketitle
	
	\section{Introduction}
	
	The approximation of homeomorphisms by diffeomorphisms represents a fundamental problem in geometric analysis and differential topology. In particular, understanding when and how homeomorphisms with limited regularity can be approximated by piecewise linear homeomorphisms, smooth homeomorphisms or diffeomorphisms - while preserving key analytic properties - has significant implications across several areas of mathematics including geometric function theory, partial differential equations, and continuum mechanics. A summary of open questions on these topics can be found in \cite{Hencl}.
	
	In this paper, we provide an answer to \cite[Open Problem 2]{Hencl} in dimension $3$ and $4$. We study locally finitely piecewise affine homeomorphisms $f: \Omega \to \Delta$,  where $\Omega, \Delta \subset \mathbb{R}^d$, $d=3,4$ (for the definition see Definition~\ref{SimplexDef}). The key role these maps play in several areas makes them natural objects of study as they bridge the gap between purely topological and differentiable structures. Our main result is the following:
	
	\begin{thmx}\label{Core}
		Let $d=3$ or $d=4$. Let $\Omega\subset\er^d$ be a domain and let $p,q\in[1,\infty)$. Let $f:\Omega\to\er^d$ be a locally finite piecewise affine homeomorphism. Then, for every $\epsilon>0$, there exists a $\mathcal{C}^{\infty}$-diffeomorphism $\tilde{f}$ satisfying 
		$$
		\|\tilde{f} - f\|_{L^{\infty}(\Omega)}  + \|\tilde{f}^{-1} - f^{-1}\|_{L^{\infty}(f(\Omega))} < \epsilon
		$$
		and
		$$
		\|D\tilde{f} - Df\|_{L^p(\Omega)}  + \|D\tilde{f}^{-1} - Df^{-1}\|_{L^q(f(\Omega))}< \epsilon.
		$$
	\end{thmx}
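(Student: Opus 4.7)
The plan is to reduce to a local smoothing problem and solve it by induction on the dimension of the singular skeleton. First I would exhaust $\Omega$ by compact sets $K_1\Subset K_2\Subset\cdots$ with only finitely many simplices meeting each $K_m$, distribute an error budget $\epsilon\cdot 2^{-m}$ to the corona $K_m\setminus K_{m-1}$, and thereby reduce the statement to a \emph{finite} version: given a finite piecewise affine homeomorphism on a bounded region, produce a diffeomorphism that coincides with $f$ outside an arbitrarily thin neighbourhood of the singular set and is close to $f$ in $W^{1,p}\cap L^\infty$, with inverse close to $f^{-1}$ in $W^{1,q}\cap L^\infty$. The $L^\infty$ estimates will come for free from the smallness of the modification region, provided one keeps derivatives bounded.

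Since $f$ is smooth on each open $d$-simplex, the singular set is the skeleton $\Sigma=\bigcup_{k=0}^{d-1}\Sigma_k$, and the plan is to repair $f$ on a thin tubular neighbourhood of $\Sigma$ one stratum at a time, in decreasing order of dimension. Across an interior $(d-1)$-face $F$, continuity of $f$ forces the two adjacent affine pieces to agree on directions tangent to $F$, so in coordinates adapted to $F$ one can write $f$ as an affine function of the tangential variables plus a piecewise linear function of the normal variable only. A one-dimensional mollification of the normal component produces a $C^\infty$ map whose Jacobian is bounded below in terms of the Jacobians of the two adjacent pieces; it agrees with $f$ outside a thin slab around $F$, and the inverse estimate follows because the smallest singular value is controlled on the slab.

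Near lower-dimensional strata the construction is the genuinely hard part. In a tube around a $(d-2)$-stratum, the transverse section is a $2$-disc triangulated into wedges coming from all adjacent simplices, and the restriction of $f$ to this section is a $2$-dimensional piecewise affine homeomorphism. More generally, the link of a $k$-stratum is a triangulated $(d-k-1)$-sphere, and smoothing $f$ on a tubular neighbourhood reduces to piecewise affine approximation on that link. In dimension $d=3$ this reduces everything to problems on triangulated surfaces, for which diffeomorphic approximation is well developed; in dimension $d=4$ the link of a vertex is a triangulated $3$-sphere, so the three-dimensional case has to be established first and then invoked to handle the vertices in four dimensions. This is precisely why the theorem is restricted to $d\in\{3,4\}$.

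I expect two main obstacles. First, maintaining injectivity throughout every stage: each local smoothing has to be a genuine diffeomorphism with Jacobian uniformly bounded below, and adjacent local modifications must glue to a globally injective map without overlaps, which rules out a naive mollification or partition-of-unity argument and forces either explicit local replacements or a Bing-shrinking type construction. Second, the bound $\|D\tilde f^{-1}-Df^{-1}\|_{L^q}<\epsilon$ is not automatic from the $W^{1,p}$ bound on $\tilde f-f$: it requires quantitative lower bounds on the smallest singular value of $D\tilde f$ on the modification region so that, via $D\tilde f^{-1}=(D\tilde f)^{-1}\circ\tilde f^{-1}$, the $L^p$ closeness on $\Omega$ translates into $L^q$ closeness on $f(\Omega)$. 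Arranging the tubular thicknesses around each stratum to shrink faster than the derivative jumps grow, while summing correctly over the countably many simplices in the locally finite decomposition, is the main bookkeeping task.
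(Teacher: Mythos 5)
Your outline matches the paper's strategy up to the edge strata: exhaustion to a finite case, smoothing across $(d-1)$-faces by a one-variable convex combination in the normal direction, and decreasing-dimension treatment of the singular skeleton with a tubular-neighbourhood thickness you shrink to control the $L^p$ and $L^q$ errors (the $L^\infty$ bound on both $D\tilde f$ and $D\tilde f^{-1}$ is maintained uniformly, and the measure of the modified set is sent to zero — exactly the paper's argument via \eqref{Napoli}--\eqref{Napoli1}). Where your proposal goes wrong is at the lowest strata, and with the reason the theorem is restricted to $d\in\{3,4\}$. The vertex problem is \emph{not} ``piecewise affine approximation on the link'': by the time one reaches a vertex, the face and edge constructions have already produced a smooth diffeomorphism of an annulus $B(0,2R)\setminus B(0,R/2)$ around the vertex, and after a radial flattening its restriction is a smooth orientation-preserving diffeomorphism of $S^{d-1}$. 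The genuinely hard step is \emph{extending} that boundary diffeomorphism to a diffeomorphism of the ball $B(0,R/2)$, and this is done by coning along an isotopy to the identity. Such an isotopy exists precisely because $\operatorname{Diff}^+(S^{d-1})$ is connected — Smale's theorem for $S^2$ (used via Corollary~\ref{VertThm}) and Hatcher's Smale conjecture for $S^3$. That is the content of Theorem~\ref{Tomasova}, and it is the actual reason the result is stated for $d=3,4$: connectedness of $\operatorname{Diff}^+(S^4)$ is open (so $d=5$ is out of reach), and it fails for most higher-dimensional spheres, so no dimensional induction of the kind you describe can go through.

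Two smaller points. First, even at the $1$-strata (edges) in $d=3$ the paper does not invoke the planar theorem as a black box; it performs an explicit three-annulus construction ($P_1,P_2,P_3$ in Lemma~\ref{SegLem}) in which the last stage again uses connectedness of the diffeomorphism group, this time of $S^1$ — elementary, but it shows the same topological mechanism is already present one dimension down, not just at vertices. Second, your remark that the inverse estimate ``requires quantitative lower bounds on the smallest singular value of $D\tilde f$'' is on the right track, but the paper obtains it more simply: the construction is scale-invariant, so $\|D\tilde f\|_{L^\infty}$ and $\|D\tilde f^{-1}\|_{L^\infty}$ are both bounded independently of the shrinking parameter $\lambda$, and then $\mathcal L^d(\tilde f(E_\lambda))\le \|D\tilde f\|_{L^\infty}^d\,\mathcal L^d(E_\lambda)\to 0$ gives the $L^q$ bound on the inverse with no further singular-value analysis.
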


	Recently a body of research has accrued around the approximation of homeomorphisms by diffeomorphisms in Sobolev spaces. Let us note some of the most important results. The celebrated breakthrough results in the area, which stimulated much interest in the subject, were given by Iwaniec, Kovalev, and Onninen in \cite{IKO} and \cite{IKO2}, where they found diffeomorphic approximations to any planar homeomorphism $f\in W^{1,p}(\Omega,\mathbb{R}^2)$, for any $1<p<\infty$ in the $W^{1,p}$ norm. The remaining open case (in the plane), $p=1$, was solved by Hencl and Pratelli in \cite{HP} using a different method: first generating piecewise affine approximations and then smoothing them.
	
	All previous results which dealing with the approximation of homeomorpshisms by diffeomorphisms are planar in nature. To date, there are no known results (to the best of the authors' knowledge) allowing approximation of Sobolev homeomorphisms by diffeomorphisms in the Sobolev space in higher dimensions. The construction in \cite{IKO} yields a diffeomorphism, whereas the construction in \cite{HP} generates a piecewise affine homeomorphism. Therefore, we quickly arrive at the question of whether approximation by piecewise affine homeomorphisms or by diffeomorphisms is equivalent. One implication of this question was shown in \cite{IO2}. Here they show that any diffeomorphism can be triangulated for any dimension $d\geq 1$. The maps generated by their process remain close to the original diffeomorphism in $W^{1,\infty}$ and the same holds for the inverse. On the other hand, it is impossible for a diffeomorphism to be arbitrarily close to a piecewise affine homeomorphism in $W^{1,\infty}$ since the derivative must be continuous and this adds some extra difficulty to the opposite implication.
	
	Known results in this direction, so far, are as follows. In \cite{MP} it was proven that in the planar case one can approximate a Sobolev piecewise affine map by diffeomorphisms in the Sobolev space. A further planar result working also with the second derivatives (of the map but not its inverse) has been achieved in \cite{CH}. It was proven in \cite{CS} that one can approximate any Sobolev piecewise affine homeomorphism by smooth homeomorphisms (this does not require that the approximations have smooth inverse) from $\er^d$ to $\er^d$ for any $d$.
	
	One disadvantage of the previously mentioned result is that it is known that concepts like connectedness and closure are quite different when we require only smooth homeomorphisms as opposed to diffeomorphisms, we refer the reader to \cite{Rob2}. Moreover, the Theorem in \cite{Rob2} implies that diffeomorphisms are connected in pointwise $\mathcal{C}^k$ topology, but this is not true in $\mathcal{C}^k$ uniform topology for all dimensions as can be observed from \cite[Section II]{Hatcher50} and \cite[Table 1 and 2]{StableHomotopyGroups} and connectedness of diffeomorphisms is a critical part of our construction. In light of the technique in \cite{HP}, the question of the equivalence of approximation by piecewise affine homeomorphisms and diffeomorphisms is very interesting in higher dimension and our result covers precisely this gap in dimension 3 and 4.
	
	Theorem~\ref{Core} can be generalized to the more general case in the setting of rearrangement invariant function spaces. For a definition of a rearrangement invariant function space $X$ with absolutely continuous norm, see Section~\ref{Wales}.
	
	\begin{thmx}\label{main}
		Let $d=3$ or $d=4$. Let $\Omega\subset\er^d$ be a domain. Let $f:\Omega\to\er^d$ be a locally finite piecewise affine homeomorphism. Let $X(\Omega), Y(f(\Omega))$ be a pair of rearrangement invariant Banach function spaces with $\phi_X(t), \phi_Y(t) \to 0$ as $t\to 0$. Then for every $\epsilon>0$ there exists a diffeomorphism $\tilde{f} \in \mathcal{C}^{\infty}(\Omega, \er^d)$ with 
		$$
		\|\tilde{f} - f\|_{L^{\infty}(\Omega)}  + \|\tilde{f}^{-1} - f^{-1}\|_{L^{\infty}(f(\Omega))} < \epsilon
		$$
		and
		$$
		\|D\tilde{f} - Df\|_{X(\Omega)} + \|D\tilde{f}^{-1} - Df^{-1}\|_{Y(f(\Omega))} < \epsilon.
		$$
	\end{thmx}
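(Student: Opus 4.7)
The plan is to derive Theorem~\ref{main} from the construction behind Theorem~\ref{Core} by extracting two quantitative features of that output. Concretely, the diffeomorphism produced there can be arranged so that, for any prescribed $\delta>0$, we have: (i) $\tilde f=f$ outside an open set $U_\delta\subset\Omega$ whose intersection with each simplex $\sigma$ of the underlying triangulation has measure at most $\delta|\sigma|$; and (ii) on each component of $U_\delta$ the pointwise inequality $|D\tilde f|\le A(d)\,|Df|$ holds, together with the symmetric $|D\tilde f^{-1}|\le A(d)\,|Df^{-1}|$ on $f(U_\delta)$. Both features are already built into the smoothing used for Theorem~\ref{Core}: the modification lives in a tubular neighborhood of the $(d{-}1)$-skeleton of the triangulation with a free tubular radius (giving (i)), and the smoothing is an averaging procedure that cannot amplify the derivative beyond a dimensional multiple of the local Lipschitz constant of $f$ (giving (ii)).

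Once (i)--(ii) are in hand, the remaining work is bookkeeping against the fundamental functions $\phi_X,\phi_Y$. The steps are: first, enumerate the simplices $\{\sigma_k\}$ of the triangulation and set $C_k:=\|Df\|_{L^\infty(\sigma_k)}$, $C'_k:=\|Df^{-1}\|_{L^\infty(f(\sigma_k))}$ (finite by piecewise affinity); second, use that $\phi_X(t)\to0$ and $\phi_Y(t)\to0$ as $t\to0$ to shrink the tubular radius at $\sigma_k$ until $(1+A)C_k\phi_X(|U_\delta\cap\sigma_k|)<2^{-k}\epsilon$ and $(1+A)C'_k\phi_Y(|f(U_\delta)\cap f(\sigma_k)|)<2^{-k}\epsilon$; third, invoke (ii) and the triangle inequality in $X$ to obtain
\[
\|D\tilde f-Df\|_{X(\Omega)}\le(1+A)\sum_k C_k\,\phi_X(|U_\delta\cap\sigma_k|)<\epsilon,
\]
together with the identical computation in $Y$. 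The two $L^\infty$ bounds on $\tilde f-f$ and $\tilde f^{-1}-f^{-1}$ are immediate from (i) and the uniform continuity of $f$ and $f^{-1}$ on compact sets, since any movement inside $U_\delta$ is on a scale comparable to the tubular radius.

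The main obstacle is property (ii): controlling $|D\tilde f|$ pointwise by a dimensional multiple of $|Df|$ on the entire modification region, independently of the tubular radius. This is precisely what enables one to trade the $L^p$-integrability of Theorem~\ref{Core} for an arbitrary r.i.\ norm, because once $D\tilde f-Df$ is bounded by a constant on a small set $U_\delta$ its $X$-norm is controlled by $\phi_X(|U_\delta|)$, which is as small as we like. Establishing this uniform pointwise bound, with a constant that does not degenerate as the tubular radius shrinks, is the technical core of the smoothing scheme for Theorem~\ref{Core}; the only genuinely new ingredient for Theorem~\ref{main} is to tune the radius in terms of $\phi_X$ and $\phi_Y$ rather than in terms of $|U_\delta|^{1/p}$.
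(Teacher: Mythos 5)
Your overall strategy---localize the modification to a small set, establish a derivative bound there that does not degenerate as the tubular radius shrinks, and then use $\phi_X(t)\to 0$ to make the $X$-norm of the difference small---is exactly the paper's. The paper packages the last step as Lemma~\ref{Rozumny} (a smallness-transfer lemma for r.i.\ spaces: $L^\infty$-bounded plus $L^1$-small implies $X$-small); your direct bound $\|u\|_X\le M\,\phi_X(|\{u\neq 0\}|)$ for $|u|\le M$ supported on a small set is an equivalent and slightly more elementary route to the same conclusion, and your $2^{-k}\epsilon$ bookkeeping across simplices mirrors the paper's partition into the annular families $H_n$ with $2^{-n}\epsilon$ budgets.

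However, your claim (ii) is overstated and misidentifies the mechanism. You assert a \emph{pointwise} bound $|D\tilde f|\le A(d)|Df|$ with a purely dimensional constant $A(d)$, attributing it to the smoothing being ``an averaging procedure.'' The construction does not yield this: near vertices (Lemma~\ref{VertLem}), $\tilde f$ is built from an isotopy $\Psi$ provided by Theorem~\ref{Smale}, whose derivative has no quantitative control in terms of $Df$; if $Df$ is small near a vertex the isotopy can still have a large derivative. What the construction actually gives---and what is the genuine technical core---is the scaling-invariance \eqref{Napoli} and \eqref{Napoli1}: $\|Dg_\lambda\|_{L^\infty}=M$ and $\|Dg_\lambda^{-1}\|_{L^\infty}=K$, constants depending on $f$ but independent of the tubular scaling parameter $\lambda$, because $M$ and $K$ are functions only of the ratios $r_i/R$, $w_j/R$, $w_j/r_i$ held fixed as $\lambda\to 0$. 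Your final estimate survives this correction---replace $(1+A)C_k$ by $M(f)+C_k$ and likewise on the inverse side---but the claimed pointwise comparison with a dimensional constant is not what the paper proves nor what is needed.
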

	
	Our approach was developed independently of the one used in \cite{Munk}. Nevertheless, it exhibits many similarities in both techniques and intermediate results. Furthermore, our result could, in principle, be extended to dimensions higher than four by restricting to a smaller class of piecewise affine homeomorphisms, as is done in that work. However, we have not pursued this direction in the present paper.

	In Section~\ref{Prelim} we introduce some notion and notation we need through the paper. Later, in Section~\ref{Meat} we give a full detailed proof of the claim in dimension three. Finally, we explain the necessary modifications for dimension four and comment on why we cannot directly generalize our approach to higher dimension.

	\section{Preliminaries}~\label{Prelim}
	In this section, we introduce some notions and notations that we use repeatedly. When we say that a map is smooth we mean of class $\mathcal{C}^{\infty}$ on its given domain.
	
	Let $u:\rn \to \er^d$ be a map. By $\partial_ku(x)$ we denote the classical partial derivative of u in direction $e_k$ and more generally, for a vector $v\in\er^3$ by $\partial_v f$ we denote the pointwise directional derivative of $f$ in the direction $v$. By $Du(x)$ we denote the pointwise gradient of $u$ at $x$. In this paper we work with locally Lipschitz maps (smooth maps, piecewise affine homeomorphisms and diffeomorphisms), therefore the pointwise partial derivatives and the weak derivatives of $u$ coincide in the $L^{1}_{\loc}$ sense.
	
	Let $\eta\in\mathcal{C}^\infty(\er)$ with 
	\begin{equation}\label{Beef}
		\eta(t) = 0 \text{ for all } t\leq 0, \, \eta(t)= 1 \text{ for all } t\geq 1 \text{ and } 0\leq \eta'\leq 2.
	\end{equation}
	\begin{definition}\label{SimplexDef}
		Let $A_1, A_2, \dots A_{n+1} \in  \rn$ be points so that the vectors $A_2 - A_1$, $A_3-A_1, \dots A_{n+1}-A_1$ are linearly independent. We call $T = \co \{A_1, A_2, \dots, A_{n+1}\}$, the convex hull of the points $A_1, A_2, \dots, A_{n+1}$ a simplex (or more specifically an $n$-simplex).
		
		Let $\{B_1, B_2, \dots, B_{k+1}\} \subset \{A_1, A_2, \dots A_{n+1}\}$, then we refer to $F =\co \{B_1, B_2, \dots, B_{k+1}\}$ as a $k$-subsimplex of $T$. Given $F$, a $k$-subsimplex of $T$ we denote by $N_F$ the space of vectors perpendicular to the vectors $B_2-B_1$, $B_3-B_1, \dots, B_{k+1}- B_1$.
		
		Let $\Omega\subset \rn$, by the term simplicial complex of $\Omega$ we refer to a countable (possibly finite) set $\T = \{T_1, T_2, \dots\}$ of simplices such that every $k$-subsimplex of a simplex from $\T$ is also in $\T$, every non-empty intersection of any two simplices in $\T$ is a $k$-subsimplex of both of them and $\Omega = \bigcup_{i} T_i$.
		
		We call a simplicial complex locally finite if for any $x\in \Omega$ there exists an $r>0$ such that only a finite number of simplices $T_i$ intersect $B(x,r)$.
		
		We refer to $f$ as a piecewise affine map on $\Omega$ if there exists a simplicial complex $\T$ on $\Omega$ such that the restriction of $f$ to each $T_i \in\T$ is equal to some affine map $L_i(x) = M_ix + c_i$. Further we say that $f$ is a locally finite piecewise affine homeomorphism, if $\T$ is locally finite.
	\end{definition}
	
	By $\pi_3:\er^3 \to \er^3$ we denote the projection
	$$
	\pi_3(x,y,z) = (x,y,0)
	$$
	and similarly by $\tilde{\pi}_3:\er^3 \to \er^2$ we denote the projection
	$$
	\tilde{\pi}_3(x,y,z) = (x,y).
	$$
	
	The following theorem follows from \cite[Theorem 6]{S}.
	
	\begin{thm}\label{Smale}
		For every $\alpha_0$, a sense-preserving  $\mathcal{C}^{\infty}$ diffeomorphism of $S^2 = \partial B_{\er^3}(0,1)$ onto itself, there exists an $\alpha\in \mathcal{C}^{\infty}([0,1]\times S^2,S^2)$ such that $\alpha(t,\cdot)$ is a sense-preserving diffeomorphism of $S^2$ for each $t$, $\alpha(0,\cdot) = \id(\cdot)$ and $\alpha(1,\cdot) = \alpha_0(\cdot)$.
	\end{thm}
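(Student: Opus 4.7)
The plan is to derive the statement from Smale's theorem in \cite[Theorem 6]{S}, which asserts that the group $\mathrm{Diff}^\infty_+(S^2)$ of orientation-preserving smooth self-diffeomorphisms of $S^2$, equipped with its $C^\infty$ topology, is homotopy equivalent to $SO(3)$. In particular this group is path-connected, so for the given sense-preserving diffeomorphism $\alpha_0$ there exists a continuous path $\beta:[0,1]\to \mathrm{Diff}^\infty_+(S^2)$ with $\beta(0)=\id$ and $\beta(1)=\alpha_0$. Setting $\tilde\alpha(t,x):=\beta(t)(x)$ then yields a continuous map on $[0,1]\times S^2$ whose every time slice is a smooth sense-preserving diffeomorphism of $S^2$.

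The remaining task is to replace $\tilde\alpha$ by a jointly $\mathcal{C}^\infty$ isotopy $\alpha$ between $\id$ and $\alpha_0$ while preserving the diffeomorphism property at every time. To accomplish this I would choose a fine partition $0=t_0<t_1<\dots<t_N=1$ such that each composition $\tilde\alpha(t_i,\cdot)\circ\tilde\alpha(t_{i-1},\cdot)^{-1}$ is $C^1$-close to $\id_{S^2}$. On each subinterval $[t_{i-1},t_i]$ one can then construct a smooth isotopy from $\tilde\alpha(t_{i-1},\cdot)$ to $\tilde\alpha(t_i,\cdot)$: writing the near-identity composition in suitable local charts as $\id + \Phi$ with $\Phi$ small in $C^1$, the straight-line homotopy $\id + s\Phi$ stays inside the open set of diffeomorphisms for every $s\in[0,1]$. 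Reparameterising the interpolation parameter $s=s(t)$ via the smooth cutoff $\eta$ from \eqref{Beef}, and gluing the resulting pieces with a smooth partition of unity on $S^2$, produces a map $\alpha\in\mathcal{C}^\infty([0,1]\times S^2, S^2)$ with the required properties.

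The main obstacle lies precisely in this smoothing step: a naive mollification of $\tilde\alpha$ in the time variable would smooth its coordinates but could destroy either the invertibility of the Jacobian or the global injectivity on $S^2$ at some intermediate time. The subdivision argument above circumvents this difficulty by ensuring that on each short interval the interpolation takes place inside a small $C^1$-neighbourhood of the identity, an open subset of $\mathrm{Diff}^\infty_+(S^2)$. Once a sufficiently fine partition is chosen — its existence follows from the uniform continuity of $\beta$ — the remaining construction is routine and uses only local charts on $S^2$, partitions of unity, and the cutoff $\eta$ that will be employed throughout the paper.
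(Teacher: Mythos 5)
The paper offers no proof of Theorem~\ref{Smale}; it simply attributes the statement to Smale via \cite[Theorem 6]{S}. Your proposal therefore supplies more than the source text does, and your high-level route—extract path-connectedness of $\mathrm{Diff}^\infty_+(S^2)$ from Smale's result, then upgrade the resulting continuous path to a jointly $\mathcal C^\infty$ isotopy by subdividing time so that each step is $C^1$-close to $\id$—is the natural way to fill in that citation, and the $\eta$-reparameterization you use in the time variable is exactly the device the paper itself employs in the proof of Corollary~\ref{VertThm}.

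There is, however, a genuine gap in your smoothing step: ``gluing the resulting pieces with a smooth partition of unity on $S^2$'' does not in general preserve the diffeomorphism property. If you form the straight-line homotopy $\id+s\Phi$ separately in two overlapping charts and blend the results as $\sum_i\psi_i(x)\,(\id+s\Phi_i)(x)$ with a partition of unity $\{\psi_i\}$, the blend need not be injective and its Jacobian need not stay nonvanishing—this is precisely the failure mode you yourself flag for naive mollification, and a partition-of-unity blend is no safer. The fix is to interpolate \emph{globally}, avoiding charts. Since $S^2\subset\er^3$, when $g$ is $C^1$-close to $\id$ one has $(1-s)x+sg(x)\neq 0$ for all $(s,x)\in[0,1]\times S^2$, so
$$
h_s(x):=\frac{(1-s)x+sg(x)}{\bigl|(1-s)x+sg(x)\bigr|}
$$
is a jointly $\mathcal C^\infty$ family of self-maps of $S^2$, each $C^1$-close to $\id$ and therefore a diffeomorphism; equivalently one may interpolate along great-circle geodesics. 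Replacing your chart-wise interpolation with this ambient one, while retaining the subdivision and time-reparameterization, closes the gap and gives a correct derivation of the statement from Smale's connectedness result.
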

	\begin{corollary}\label{VertThm}
		Let $\tilde{f}$ be a $\mathcal{C}^{\infty}$ sense-preserving diffeomorphism of $S^2 = \partial B_{\er^3}(0,1)$ onto itself. There exists $g$, a $\mathcal{C}^\infty$ diffeomorphism of $\overline{B_{\er^3}(0,1)}$ onto $\overline{B_{\er^3}(0,1)}$ with $g_{\rceil \partial B_{\er^3}(0,1)} = \tilde{f}$.
	\end{corollary}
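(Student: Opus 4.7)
The plan is to extend $\tilde{f}$ to the closed ball by radial interpolation through the Smale isotopy. Let $\alpha\in\mathcal{C}^\infty([0,1]\times S^2,S^2)$ be the isotopy provided by Theorem~\ref{Smale}, so that $\alpha(0,\cdot)=\id$, $\alpha(1,\cdot)=\tilde{f}$, and each slice $\alpha(t,\cdot)$ is a sense-preserving diffeomorphism of $S^2$. Pick a smooth non-decreasing cutoff $\phi\colon[0,1]\to[0,1]$ with $\phi\equiv 0$ on $[0,\tfrac{1}{3}]$ and $\phi(1)=1$, and define
$$
g(x)=\begin{cases} x, & |x|\leq\tfrac{1}{3},\\ |x|\,\alpha\bigl(\phi(|x|),\,x/|x|\bigr), & \tfrac{1}{3}<|x|\leq 1.\end{cases}
$$
Because $\alpha(0,\omega)=\omega$, the two pieces match (with all derivatives) across the sphere $|x|=\tfrac{1}{3}$; on $\partial B(0,1)$ we have $g(x)=\alpha(1,x)=\tilde{f}(x)$, so $g$ is smooth on $\overline{B(0,1)}$ and realizes the prescribed boundary condition.

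Next I would check that $g$ is a bijection of $\overline{B(0,1)}$ onto itself. Since $|g(x)|=|x|$, the map preserves each sphere $\{|x|=r\}$ and acts on it as the diffeomorphism $r\omega\mapsto r\alpha(\phi(r),\omega)$, yielding bijectivity with an explicit inverse on each sphere via $\alpha(\phi(r),\cdot)^{-1}$.

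The main (and really only nontrivial) step is verifying that the Cartesian Jacobian $Dg(x)$ is nonsingular for every $x\neq 0$. I would write $x=r\omega$ and choose orthonormal frames $(\omega,e_1,e_2)$ at $x$ and $(\alpha(\phi(r),\omega),\tilde{e}_1,\tilde{e}_2)$ at $g(x)$, with $e_j,\tilde{e}_j$ tangent to the respective spheres. A direct computation gives
$$
\partial_\omega g(x)=\alpha(\phi(r),\omega)+r\phi'(r)\,\partial_t\alpha(\phi(r),\omega),\qquad \partial_{e_j}g(x)=D_\omega\alpha(\phi(r),\omega)(e_j),
$$
where the second summand of the radial derivative is tangent to $S^2$ at $\alpha(\phi(r),\omega)$ (as the $t$-derivative of a curve on the sphere). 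Hence $Dg(x)$ is lower block-triangular in these bases with radial diagonal entry $1$ and angular block equal to the spherical differential $D_\omega\alpha(\phi(r),\omega)$. Since each $\alpha(t,\cdot)$ is a sense-preserving diffeomorphism of $S^2$, this block has positive determinant, so $Dg(x)$ is invertible and $g$ is orientation preserving. Combined with bijectivity and smoothness up to the boundary, this gives $g\in\mathcal{C}^\infty(\overline{B(0,1)},\overline{B(0,1)})$ as the required diffeomorphism extending $\tilde{f}$.
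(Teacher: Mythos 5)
Your proof is correct and follows essentially the same construction as the paper: obtain the Smale isotopy via Theorem~\ref{Smale}, then extend radially so the map is the identity near the origin and equals $\tilde f$ on the boundary. The only cosmetic difference is that the paper reparametrizes the isotopy $h$ directly so that $h(\cdot,t)=\id$ for $t\in[0,\tfrac13]$ and $h(\cdot,t)=\tilde f$ for $t\in[\tfrac23,1]$, whereas you achieve the same effect by composing with a cutoff $\phi$ in the radial variable; both are equivalent. Your explicit verification that $Dg$ is block-triangular in the adapted frames, with unit radial entry and angular block equal to the spherical differential of $\alpha(\phi(r),\cdot)$, supplies the detail the paper leaves implicit when it simply asserts that $g$ is a diffeomorphism.
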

	\begin{proof}
		By Theorem~\ref{Smale} we have an isotopy $h:\partial B(0,1)\times [0,1] \to \partial B(0,1)$ such that $h(x,t)$, $t\in [0,1]$, $h(x,1) = \tilde f(x)$ and $h(x,0) = x$ for all $x\in \partial B(0,1)$. In fact may assume that $h(x,t) =  \tilde f(x)$ for $t\in [\tfrac{2}{3},1]$ and $h(x,t) =  x$ for $t\in [0,\tfrac{1}{3}$]. Then we define $g:\overline{B(0,1)} \to \overline{B(0,1)}$ as 
		$$
		g(x) = \begin{cases}
			|x|h\Big(\frac{x}{|x|}, |x|\Big) \quad & |x|>0\\
			0 &x=0.
		\end{cases}
		$$
		Then $g$ is a diffeomorphism and $g = \tilde{f}$ on $\partial B(0,1)$.
	\end{proof}
	
	The following lemma is well-known, we include it for simple reference.
	\begin{lemma}\label{Book}
		Let $d\geq 1$, let $\Omega, \Delta \subset \mathbb{R}^d$ be bounded domains and let $\mu:\overline{\Omega}\to \mathbb{R}^d$ be a local diffeomorphism of $\Omega\subset \mathbb{R}^d$ and continuous on $\overline{\Omega}$. Further we assume that there is a neighborhood $O$ of $\partial\Omega$ in $\overline{\Omega}$ and a neighborhood $D$ of $\partial\Delta$ in $\overline{\Delta}$ such that $\mu$ is a homeomorphism $O$ onto $D$. Then $\mu$ is a diffeomorphism of $\Omega$ onto $\Delta$. 
	\end{lemma}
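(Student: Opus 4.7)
\emph{Proof proposal.} The plan is to show that $\mu\colon\Omega\to\Delta$ is a bijection; combined with the hypothesis that it is a local diffeomorphism on $\Omega$, this automatically promotes it to a diffeomorphism, since the local $C^\infty$ inverses patch into a single global $C^\infty$ inverse. My strategy is to first analyse the behaviour of $\mu$ on $\partial\Omega$ and then use the Brouwer topological degree to count preimages over $\Delta$.

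For the first step, I would verify that $\mu|_{\partial\Omega}\colon\partial\Omega\to\partial\Delta$ is a homeomorphism. The set $O\cap\Omega$ is open in $\er^d$ and $\mu$ restricted to it is an injective local diffeomorphism, so $\mu(O\cap\Omega)$ is open in $\er^d$ by invariance of domain. Since this image lies in $\overline{\Delta}\supset D$ and $\partial\Delta$ has empty interior in $\er^d$, we get $\mu(O\cap\Omega)\subset\Delta$. Because $\mu|_O\colon O\to D$ is a bijection and $\partial\Delta\subset D$, this forces $\mu(\partial\Omega)=D\setminus\mu(O\cap\Omega)=\partial\Delta$, and hence $\mu|_{\partial\Omega}$ is a homeomorphism onto $\partial\Delta$.

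For the second step, since $\mu$ is a local diffeomorphism on the connected set $\Omega$, $\det D\mu$ has a constant sign $\sigma\in\{+1,-1\}$ on $\Omega$. For each $y\in\Delta$, the set $\mu^{-1}(y)\cap\Omega$ is finite: it is discrete by local injectivity, and no accumulation is possible because $\Omega$ is bounded and any limit point in $\partial\Omega$ would map into $\mu(\partial\Omega)=\partial\Delta$, which does not contain $y$. Since $\mu|_{\partial\Omega}$ is a homeomorphism onto $\partial\Delta$, the Brouwer degree satisfies $\deg(\mu,\Omega,y)=\pm 1$ for every $y\in\Delta$ and $\deg(\mu,\Omega,y)=0$ for $y\notin\overline{\Delta}$. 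Combined with the representation $\deg(\mu,\Omega,y)=\sigma\cdot\#(\mu^{-1}(y)\cap\Omega)$, this gives exactly one preimage in $\Omega$ for every $y\in\Delta$ and none for $y\notin\overline{\Delta}$. Hence $\mu(\Omega)\subset\overline{\Delta}$; openness of $\mu(\Omega)$ together with the empty interior of $\partial\Delta$ yield $\mu(\Omega)\subset\Delta$, and the nonzero degree on $\Delta$ gives surjectivity. I expect the main obstacle to be the careful execution of the first step, which invokes invariance of domain and the regularity $\operatorname{int}\overline{\Delta}=\Delta$ (satisfied in the paper's applications to images of piecewise-affine homeomorphisms); once the boundary homeomorphism is pinned down, the degree formula combined with the constant Jacobian sign immediately gives bijectivity.
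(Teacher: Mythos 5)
Your proof is correct and follows the same degree-theoretic route as the paper: both argue that $\deg(\mu,\Omega,\cdot)=\pm1$ on $\Delta$ because $\mu$ agrees with a homeomorphism near $\partial\Omega$, then use the constant sign of $\det D\mu$ together with the fact that every $y\in\Delta$ is a regular value to conclude that each point of $\Delta$ has exactly one preimage, whence $\mu$ is a bijection and hence a diffeomorphism. Your Step~1, verifying explicitly via invariance of domain that $\mu(\partial\Omega)=\partial\Delta$ (and flagging the mild regularity $\operatorname{int}\overline{\Delta}=\Delta$ this needs), is a careful elaboration of a point the paper passes over tacitly when it asserts that the degree is constant and equal to $\pm1$ on $\Delta$.
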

	\begin{proof}
		It is a well known fact from degree theory that $\deg(\mu, \cdot, \Omega)$ is constant on components of $\rn\setminus \mu(\partial \Omega)$. Also it is well known that the degree of a homeomorphism is $\pm 1$.  Since $\mu$ is equal to a homeomorphism on $D$, we have that $\deg(\mu,\cdot,  \Omega) =\pm 1$ on $\Delta$ and is constant. Since $\mu$ is a local diffeomorphism, the sign of $\det D\mu$ is constant (and equal to $\deg(\mu, y,\Omega)$ for every $y\in \Delta$) on $\Omega$. Since every point $y\in \Delta$ is a regular value of $\mu$, there is at most one point $x\in \Omega$ such that $\mu(x) = y$. Similarly for all $y\in \mathbb{R}^d\setminus \overline{\Omega}$ it holds that $\mu^{-1}(\{y\})= \emptyset$. Thus $\mu$ is injective on $\Omega$ and $\mu(\Omega) = \Delta$. It follows that $\mu$ is a diffeomorphism of $\Omega$ onto $\Delta$.
	\end{proof}
	
	Let $\mu:S^{d-1} \to S^{d-1}$ be a smooth mapping. We call the derivative of $\mu$, the linear map $d\mu_x:T_xS^{d-1}\to T_{\mu(x)}S^{d-1}$ defined in the usual way. Let $y\in S^{d-1}$ be such that the determinant of $d\mu_x$ is non-zero for all $x\in \mu^{-1}(y)$ (which is true for almost all points $y\in \mu(S^{d-1})$) then we define
	$$\deg(\mu,y):=\sum_{x\in \mu^{-1}(y)}\sgn \det d\mu_x.$$
	For more details we refer the reader to \cite[Chapter~5]{Milnor}.
	
	\begin{thm}\label{AlgTop}
		Let $d\geq 3$ and $\mu:S^{d-1} \to S^{d-1}$ be a local diffeomorphism, then $\mu$ is a diffeomorphism.
	\end{thm}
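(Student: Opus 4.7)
The strategy rests on two facts that together force $\mu$ to have exactly one preimage over every point: the sign of $\det d\mu$ is constant on $S^{d-1}$, and the integer-valued degree $y\mapsto\deg(\mu,y)$ is constant on $S^{d-1}$. From these I can conclude $\#\mu^{-1}(y)=1$ for every $y$, giving injectivity; surjectivity follows by an open-and-closed argument; and a bijective local diffeomorphism is a diffeomorphism.

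First I would note that for $d\geq 2$ the sphere $S^{d-1}$ is connected, and that, since $\mu$ is a local diffeomorphism, the map $x\mapsto\sgn\det d\mu_x$ is continuous into $\{-1,+1\}$ and hence constant. The hypothesis $\deg(\mu,Q)=1$ rules out the sign $-1$ (otherwise $\sum_{x\in\mu^{-1}(Q)}\sgn\det d\mu_x$ would be non-positive), so $\sgn\det d\mu_x=+1$ on all of $S^{d-1}$. This yields
$$\deg(\mu,y)=\#\mu^{-1}(y)$$
at every regular value $y$; since $\mu$ has no critical points, every $y\in S^{d-1}$ is regular, and the fiber $\mu^{-1}(y)$ is a discrete closed subset of the compact manifold $S^{d-1}$, hence finite, so the sum is well defined.

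Next I would invoke the standard fact (see \cite[Chapter~5]{Milnor}) that the degree is locally constant on the set of regular values. Because every point is a regular value of $\mu$ and $S^{d-1}$ is connected, $\deg(\mu,\cdot)$ is constant on $S^{d-1}$, and therefore equals $\deg(\mu,Q)=1$ everywhere. Combined with the identity above, this gives $\#\mu^{-1}(y)=1$ for every $y\in S^{d-1}$, so $\mu$ is injective. Surjectivity is immediate: $\mu(S^{d-1})$ is open (local diffeomorphisms are open maps) and closed (continuous image of a compact set in a Hausdorff space), hence equal to the connected $S^{d-1}$. A bijective local $\mathcal{C}^\infty$ diffeomorphism has a smooth inverse by the inverse function theorem, and we are done.

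I do not foresee any serious obstacle in this argument. The only slightly delicate point is justifying that each fiber $\mu^{-1}(y)$ is finite so that the degree formula makes sense as a finite sum; this is handled as indicated above by discreteness of the fiber in the compact $S^{d-1}$. Everything else is a straightforward combination of connectedness, the no-critical-points hypothesis, and local constancy of the topological degree.
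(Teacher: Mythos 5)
Your proof is correct and follows essentially the same route as the paper's: establish that $\deg(\mu,\cdot)\equiv 1$ on all of $S^{d-1}$ via degree theory, then use that $\mu$ is a local diffeomorphism with constant Jacobian sign $+1$ to conclude each fiber is a singleton. The only superficial difference is the reference: the paper invokes properness of $\mu$ and Dold's Proposition~5.12, while you use Milnor's manifold degree theory (local constancy on regular values plus connectedness of $S^{d-1}$); both deliver the same constancy statement, and your explicit spelling-out of the sign argument and fiber finiteness is the content the paper delegates to the proof of Lemma~\ref{Book}.
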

	
	\begin{proof}
		Since $\mu$ is a local diffeomorphism and $S^{d-1}$ is compact and connected, it is a covering map. And since $S^{d-1}$ is simply connected, $\mu$ is a homeomorphism by the uniqueness of the universal covering, see \cite[Proposition 1.37]{AlgTopHatcher}.
	\end{proof}
	
	
	
	\subsection{Rearrangement invariant function spaces}\label{Wales}
	
	Let $E$ be a (non-negligible) Lebesgue measurable subset of $\mathbb R^d$ of finite measure. We denote by $\mathcal{L}^d(E)$ its Lebesgue measure. We set 
	$$
	L^0(E)=\left\{ f: f \text{ is measurable function on } E \text{ with values in }[-\infty, + \infty] \right\}
	$$
	and
	$$L^0_+(E)= \left\{ f\in L^0(E): f\geq 0\right\}.$$
	We identify functions $f_1,f_2$ for which $\mathcal{L}^d(\{f_1\neq f_2\})=0$ in the space $L^0$.
	The non-increasing rearrangement $f^*\colon [0, +\infty] \rightarrow[0, +\infty]$ of a function $f\in L^0(E)$ is defined by
	$$
	f^*(s)=\inf\left\{ t\geq 0: \mathcal{L}^d\big (\left\{x\in E: |f(x)|>t  \right\}\big) \leq s \right\} \qquad s\in [0, + \infty)
	$$
	and the \textit{Hardy-Littlewood maximal function} $f^{**}:(0,\mathcal{L}^d(E))\to [0,+\infty)$ is given by
	$$
	f^{**}(t):=\frac{1}{t}\int_0^t f^*(s)\, dx s.
	$$
	
	\begin{definition}\label{RIS}
		Let $E$ be a Lebesgue measurable subset of $\mathbb R^d$ and let $\| \cdot \|_{X(E)}: L^0(E) \rightarrow [0, +\infty]$ be a functional. Consider the following properties
		\begin{enumerate}[\upshape(P1)]
			\item the set $\{u\in L^0(E): \|u\|_{X(E)} < \infty \}$ is a linear space and $\|\cdot\|_{X(E)}$ is a norm on this space.\label{primo}
			\item For all $f,g\in L^0_+(E)$ the inequality $f(x)\leq g(x)$  for a.e. $x$ in $E$ implies $\| f\|_{X(E)}\leq \|g\|_{X(E)}$. 
			\item $\sup\limits_{k}\| f_k\|_{X(E)}= \| f\|_{X(E)}$ if $0\leq f_k(x) \nearrow f(x)$ for a.e. $x$ in E.
			
			\item Let $G\subset E$ be a set of a finite measure. Then
			$$
			\|\chi_G\|_{X(E)}<\infty.
			$$
			\item Let $G\subset E$ be a set of a finite measure. Then there exists a constant $C_G$ depending only on the choice of the set $G$ for which
			$$
			\|f\chi_G\|_{L^1(E)}\leq C_G\|f\chi_G\|_{X(E)}.
			$$
			for all $f\in L^0(E)$.
			
			\item \label{ultimo}$$
			\| f\|_{X(E)}= \|g\|_{X(E)}\text{ whenever } f^*= g^* \text{\emph{(rearrangement invariance)}}
			$$
		\end{enumerate}
		If $\|\cdot\|_X$ enjoys the properties (P1)-(P5) we call it a Banach function norm. If it also enjoys (P6) we call it a rearrangement invariant Banach function norm. Let $\|\cdot\|_X$ be (rearrangement-invariant) Banach function norm then we call set
		$$
		X(E):=\{f\in L^0(E):\|f\|_{X(E)}<\infty\}
		$$
		endowed with the norm $\|\cdot\|_{X(E)}$ a (rearrangement-invariant) Banach function space.
	\end{definition}
	
	Given a r.i. Banach function space $X(E)$ and $0\leq s < \mathcal{L}^d(E)$ one may define the \textit{fundamental function of $X(E)$} by
	\begin{equation}\label{Jihad}
		\varphi_{X(E)}(s):= \| \chi_G\|_{X(E)}
	\end{equation}
	where $G\subset E$ is an arbitrary subset of $E$ of measure $s$.
	
	Let $X(E)$ be a Banach function space. We say that $X(E)$ has \textit{locally absolutely continuous norm} if for any finite measure set $M\subset E$ and any function $f\in X(E)$ one has 
	$$
	M\supset M_n \textup{ such that } \mathcal L^{d}(M_n)\rightarrow 0\quad\textup{implies}\quad \|f\chi_{M_n}\|_X\rightarrow 0. 
	$$
	
	The following lemma is from \cite{CGSS}.
	\begin{lemma}\label{Rozumny}
		Let $G\subset\mathbb{R}^n$ be a set of finite measure and $X(G)$ be a rearrangement invariant Banach function space satisfying
		\begin{equation}\label{lim=0}
			\displaystyle{\lim_{t\rightarrow 0+}}\varphi_X(t)=0.
		\end{equation}
		Then for every $M >0$ and $\tilde{\epsilon} > 0$ there exists a $\tilde{\delta}>0$ such that for all $u\in X(G)$ with $\|u\|_{L^{\infty}(G)} \leq M$ and $\|u\|_{L^{1}(G)}<\tilde{\delta}  \mathcal{L}^d(G)$ one has $\|u\|_{X(G)}< \tilde{\epsilon}  \mathcal{L}^d(G)$.
	\end{lemma}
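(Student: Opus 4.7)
The plan is a standard layer-cake type splitting of $u$ into its ``large'' and ``small'' parts, calibrated so that each contribution to the $X$-norm is at most $\tfrac12\tilde\epsilon\mathcal L^d(G)$.

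First, given $\tilde\epsilon>0$, I would use property (P4) to fix the finite number $\phi_X(\mathcal{L}^d(G))=\|\chi_G\|_{X(G)}$ and then choose a threshold $\lambda>0$ so small that
$$
\lambda\,\phi_X(\mathcal{L}^d(G)) < \tfrac12\tilde\epsilon\,\mathcal L^d(G).
$$
Next, using the hypothesis $\lim_{t\to 0+}\phi_X(t)=0$, I would pick $\sigma>0$ so small that $M\,\phi_X(\sigma)<\tfrac12\tilde\epsilon\,\mathcal L^d(G)$. Finally, set $\tilde\delta:=\lambda\sigma/\mathcal L^d(G)$; this is the constant we will output.

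Now take any admissible $u$, i.e. $\|u\|_{L^\infty(G)}\leq M$ and $\|u\|_{L^1(G)}<\tilde\delta\,\mathcal L^d(G)=\lambda\sigma$. Define the super-level set
$$
A:=\{x\in G:|u(x)|>\lambda\}.
$$
By Chebyshev's inequality, $\mathcal L^d(A)\leq \|u\|_{L^1(G)}/\lambda<\sigma$. Using the lattice property (P2) together with rearrangement invariance (the fundamental function bounds the norm of any characteristic function of a set of given measure), I would estimate
$$
\|u\chi_A\|_{X(G)}\leq \|M\chi_A\|_{X(G)}=M\,\phi_X(\mathcal{L}^d(A))\leq M\,\phi_X(\sigma)<\tfrac12\tilde\epsilon\,\mathcal L^d(G),
$$
while on the complement $|u|\leq \lambda$, so
$$
\|u\chi_{G\setminus A}\|_{X(G)}\leq \lambda\|\chi_G\|_{X(G)}=\lambda\,\phi_X(\mathcal L^d(G))<\tfrac12\tilde\epsilon\,\mathcal L^d(G).
$$
Adding the two via the triangle inequality (P1) gives $\|u\|_{X(G)}<\tilde\epsilon\,\mathcal L^d(G)$, as required.

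The only mildly delicate point is the monotonicity step $\|u\chi_A\|_X\leq M\|\chi_A\|_X$: this uses (P2), and the equality $\|\chi_A\|_X=\phi_X(\mathcal L^d(A))$ uses rearrangement invariance (P6), so the argument genuinely needs $X$ to be r.i. and not merely a Banach function space. I do not see any real obstacle beyond correctly juggling the two smallness parameters $\lambda$ and $\sigma$; the hypothesis $\phi_X(t)\to 0$ is used precisely (and only) to control the measure-small part.
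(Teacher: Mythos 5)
The paper does not contain a proof of this lemma --- it is stated with a citation to \cite{CGSS} --- so there is no in-paper argument to compare against. Your level-set decomposition combined with Chebyshev is correct and is the canonical proof of such a statement: (P2) and (P6) give $\|u\chi_A\|_{X}\le M\phi_X(\mathcal L^d(A))$ and $\|u\chi_{G\setminus A}\|_{X}\le\lambda\phi_X(\mathcal L^d(G))$, the assumption $\phi_X(t)\to 0$ is used exactly once to fix $\sigma$, and (P4) guarantees $\phi_X(\mathcal L^d(G))<\infty$ so that $\lambda$ can be chosen. One step you use silently and might spell out is that $\phi_X$ is nondecreasing (needed for $\phi_X(\mathcal L^d(A))\le\phi_X(\sigma)$); this follows from (P2), (P6) and the non-atomicity of Lebesgue measure, which lets one pick nested sets of any two prescribed measures. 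You may also want to shrink $\sigma$ to ensure $\sigma\le\mathcal L^d(G)$ so that $\phi_X(\sigma)$ is well-defined in the paper's convention, but this is cosmetic.
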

	
	\section{The construction in 3D}\label{Meat}
	In this section, we present a detailed proof of our result in dimension $d=3$. We begin with the following lemma that introduces a map that we use repeatedly through the paper. In our applications of the lemma, we typically ensure a priori that $\sigma <1$. 
	\begin{lemma}\label{FaceLem}
		Let the affine mappings $A_1, A_2 : \mathbb{R}^3 \to \mathbb{R}^3$ be such that the mapping
		$$
		f(x)=\begin{cases}
			A_1(x) \quad &\text{for }x_1\leq0\\
			A_2(x) \quad &\text{for }x_1\geq 0.
		\end{cases}
		$$
		is a sense-preserving homeomorphism. Further, let $[\partial_1 A_2]^1 \geq [\partial_1 A_1]^1>0$ and $w\in \mathcal{C}^{\infty}(\er^2)$ be a bounded positive function. Then, there exists a $\sigma>0$ such that whenever $|Dw(y,z)|\leq \sigma$ the map 
		\begin{equation}\label{defg}
			g_w(x) = \Big(1-\eta\Big(\frac{x_1}{w(x_2,x_3)}\Big)\Big)A_1(x) + \eta\Big(\frac{x_1}{w(x_2,x_3)}\Big)A_2(x)
		\end{equation}
		is a diffeomorphism $g_w: \mathbb{R}^3 \to \mathbb{R}^3$ satisfying 
		\begin{equation}\label{NoChange}
			g_w(x_1,x_2,x_3) = f(x_1,x_2,x_3)
		\end{equation}
		on the set
		$$
		\{(x_1,x_2,x_3)\in \er^3; x_1\leq 0\}\cup \{(x_1,x_2,x_3)\in \er^3; x_1\geq w(x_2,x_3)\}.
		$$
		Further, there exists an absolute constant $C_{\ref{Guessed}}$ independent of $f$ and $w$ such that 
		\begin{equation}\label{Guessed}
			\|Dg_w\|_{L^\infty}\leq C_{\ref{Guessed}}(1+\sigma)\|Df\|_{L^\infty}.
		\end{equation}
	\end{lemma}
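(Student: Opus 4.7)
Property \eqref{NoChange} is immediate from the cutoff properties of $\eta$: for $x_1\le 0$ one has $\eta(x_1/w)=0$ and \eqref{defg} reduces to $A_1x=f(x)$; for $x_1\ge w(x_2,x_3)$ one has $\eta(x_1/w)=1$ and $g_w=A_2x=f(x)$. This also shows $g_w$ is smooth across the interfaces $\{x_1=0\}$ and $\{x_1=w\}$, with the smoothness inside the strip being clear from the smoothness of $\eta$ and positivity of $w$.

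The main step is computing $Dg_w$ and verifying positivity of its determinant. Since $f$ is continuous across $\{x_1=0\}$, the affine maps $A_1,A_2$ must agree there, so $D_jA_1=D_jA_2$ for $j=2,3$ and $A_2x-A_1x=x_1(D_1A_2-D_1A_1)$. Writing $\phi(x)=\eta(x_1/w)$ and differentiating \eqref{defg},
$$
Dg_w=(1-\phi)DA_1+\phi DA_2+x_1(D_1A_2-D_1A_1)\otimes D\phi.
$$
When $Dw\equiv0$ one has $D\phi=(\eta'(x_1/w)/w)\,e_1$, so the rank-one part modifies only the first column of $M:=(1-\phi)DA_1+\phi DA_2$. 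Using $D_jA_1=D_jA_2$ for $j\ge 2$ and expanding the determinant along the first column gives
$$
\det Dg_w\bigm|_{Dw=0}=(1-\alpha(t))\det A_1+\alpha(t)\det A_2,\qquad \alpha(t):=\eta(t)+t\eta'(t),\ t=\tfrac{x_1}{w}.
$$
The hypothesis $(D_1A_2)_1\ge(D_1A_1)_1>0$ together with the sense-preserving nature of $f$ yields a uniform lower bound $\delta=\delta(A_1,A_2,\eta)>0$ for this expression on $t\in[0,1]$. For nonzero $Dw$ the additional components of $D\phi$ contribute a rank-one term to the other columns of magnitude $O(\|Dw\|_{L^\infty})$ with constant depending on $\|w^{-1}\|_\infty$, $\|w\|_\infty$, and $|D_1A_2-D_1A_1|$; taking $\sigma$ small enough in terms of these data preserves $\det Dg_w\ge\delta/2>0$ throughout the strip, while outside the strip $\det Dg_w=\det Df>0$. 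Hence $g_w$ is a smooth local diffeomorphism.

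To upgrade to a global diffeomorphism we apply Lemma~\ref{Book}. On any bounded domain $\Omega\subset\er^3$ containing the relevant part of the strip with $g_w=f$ in a collar neighborhood of $\partial\Omega$, setting $\mu=g_w$ and $\Delta=f(\Omega)$, the map $\mu$ is a local diffeomorphism of $\Omega$ that agrees with the homeomorphism $f$ on this collar; Lemma~\ref{Book} then gives that $\mu$ is a diffeomorphism of $\Omega$ onto $\Delta$, and exhausting $\er^3$ by such $\Omega$ yields the global conclusion. For \eqref{Guessed}, $\|(1-\phi)DA_1+\phi DA_2\|_{L^\infty}\le\|Df\|_{L^\infty}$, and the rank-one correction is controlled via $|x_1|\le w$, $|\eta'|\le 2$, $|x_1|\,|Dw|/w\le\sigma\le 1$, and $|D_1A_2-D_1A_1|\le 2\|Df\|_{L^\infty}$, yielding an absolute constant $C_{\ref{Guessed}}$.

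The principal difficulty is the uniform positivity of $\det Dg_w$: the function $\alpha(t)=\eta(t)+t\eta'(t)$ can slightly exceed 1 in the middle of $(0,1)$, so $(1-\alpha)\det A_1+\alpha\det A_2$ is not a direct convex combination, and careful use of the hypothesis $(D_1A_2)_1\ge(D_1A_1)_1>0$ together with sense-preservation of $f$ is essential. Once positivity is established, the choice of $\sigma$ absorbing the $Dw$-perturbation, the application of Lemma~\ref{Book}, and the $L^\infty$ estimate are essentially bookkeeping.
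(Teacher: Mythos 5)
Your expression of $Dg_w$ as a rank-one perturbation and the resulting formula
$$
\det Dg_w\big|_{Dw\equiv 0}=(1-\alpha(t))\det A_1+\alpha(t)\det A_2,\qquad \alpha(t)=\eta(t)+t\eta'(t),\ t=\tfrac{x_1}{w},
$$
are correct, and after the paper's normalization they are equivalent to the paper's bound $J_{g_w}\geq [D_1A_1]^1 J_2^f$. However, the positivity of this quantity, which you correctly single out as the crux, is asserted rather than proved, and it is exactly here that the paper does work you have skipped. Because $\alpha(t)$ can exceed $1$, positivity requires in effect $\det A_2\geq\det A_1$, and the hypothesis $(D_1A_2)_1\geq(D_1A_1)_1>0$ read in the given coordinates does not imply this. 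Take for instance $D_2A_i=(1,2,0)$, $D_3A_i=(0,0,1)$ for $i=1,2$, $D_1A_1=(1,0,0)$, $D_1A_2=(1,1.9,0)$, and an $\eta$ with $\eta(\tfrac12)=\tfrac12$, $\eta'(\tfrac12)=2$. Then $(D_1A_2)_1=(D_1A_1)_1=1>0$, both $\det A_1=2$ and $\det A_2=0.1$ are positive so $f$ is a sense-preserving homeomorphism, yet $\alpha(\tfrac12)=\tfrac32$ gives $(1-\alpha)\det A_1+\alpha\det A_2=-0.85<0$. What is missing is the preliminary normalization the paper performs: after a translation and a rotation in the image, one may assume $f$ maps $\{x_1=0\}$ onto itself. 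Then $D_2A_i,D_3A_i$ lie in $\{x_1=0\}$, hence $\det A_i=(D_1A_i)_1\,J_2^f$ with a common $J_2^f>0$, so the hypothesis becomes $\det A_2\geq\det A_1$ and $(1-\alpha)\det A_1+\alpha\det A_2=\det A_1+\alpha(\det A_2-\det A_1)\geq\det A_1>0$ for every $\alpha\geq 0$. (In the example above the normalized hypothesis fails, which is why it is excluded.) You should carry out this normalization, or equivalently argue that the hypothesis is to be read as $\det A_2\geq\det A_1$, before invoking positivity.

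A secondary gap is the passage to a global diffeomorphism via Lemma~\ref{Book}. You ask for a bounded $\Omega$ with $g_w=f$ on a collar of $\partial\Omega$, but any connected bounded $\Omega$ containing part of the strip $\{0<x_1<w\}$ has $\partial\Omega$ meeting that strip, so $g_w\neq f$ on part of any collar. The paper avoids this by replacing $w$ by a $\tilde w$ agreeing with $w$ on $B_{\er^2}(0,R)$ and constant outside $B_{\er^2}(0,4R)$; since the constant-width case has already been shown to be a global diffeomorphism, $g_{\tilde w}$ coincides near the boundary of a large cube with a genuine homeomorphism, which is the structure Lemma~\ref{Book} actually needs.
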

	\begin{proof}
		Since $A_1$, $A_2$, $w$ and $\eta$ are all in $\mathcal{C}^{\infty}$ and $w>0$ everywhere we have that $g_w\in \mathcal{C}^{\infty}(\er^3)$. Further, from the definition, it is immediately obvious that $g_w(x_1,x_2,x_3) = f(x_1,x_2,x_3)$ on the set $\{x_1\leq 0\}\cup \{x_1\geq w(x_2,x_3)\}$.

		Note that, since $A_1 = A_2$ on the plane $\{x\in \er^3; x_1=0\}$, we have $\partial_2A_1 = \partial_2A_2$ and $\partial_3A_1 = \partial_3A_2$. The definition of $g_w$ is independent of addition by a vector, i.e., let $v\in \er^3$ and let $\tilde{f} = f+v$, then $\tilde{g}_{w} =g_w + v$. The same holds regarding the action of linear isometries, $O(3)$. Therefore, without loss of generality, we can assume that $f$ maps the plane $\{x\in \er^3: x_1=0\}$ onto itself, while still having positive Jacobian and having $[\partial_1 A_2]^1 \geq [\partial_1 A_1]^1>0$. Our choice of isometry implies that $[\partial_2 A_1]^1 = [\partial_3 A_1]^1 = [\partial_2 A_2]^1 = [\partial_3 A_2]^1 = 0$.

		Since we have that
		\begin{equation}\label{TheMatrix}
			Dg_w = \left( \begin{matrix}
			[\partial_1g_w]^1 &0 &0\\
			[\partial_1g_w]^2 &[\partial_2g_w]^2 & [\partial_3g_w]^2\\
			[\partial_1g_w]^3 &[\partial_2g_w]^3 & [\partial_3g_w]^3
		\end{matrix}\right).
		\end{equation}
		we calculate the Jacobian of $g_w$ by an expansion of the first row. We have
		\begin{equation}\label{D1Posi}
			\begin{aligned}[]
				[\partial_1g_w]^1  =&  \Big(1-\eta\Big(\frac{x_1}{w(x_2,x_3)}\Big)\Big)[\partial_1 A_1]^1+ \eta\Big(\frac{x_1}{w(x_2,x_3)}\Big)[\partial_1 A_2]^1\\
				&  \quad +\frac{1}{w(x_2,x_3)}\eta'\Big(\frac{x_1}{w(x_2,x_3)}\Big)\big([A_2(x)]^1 - [A_1(x)]^1\big) \\
				\geq&  [\partial_1 A_1]^1 + \frac{1}{w(x_2,x_3)}\eta'\Big(\frac{x_1}{w(x_2,x_3)}\Big)\big([A_2(x)]^1 - [A_1(x)]^1\big) \\
				\geq&  [\partial_1 A_1]^1 >0.
			\end{aligned}
		\end{equation}
		Now we want to show two things. Firstly, that 
		$$
		\det \left( \begin{matrix}
			[\partial_2g_w]^2 & [\partial_3g_w]^2\\
			[\partial_2g_w]^3 & [\partial_3g_w]^3
		\end{matrix}\right)>0
		$$
		implying that $J_{g_w}>0$ everywhere. Secondly we want to prove that $g_w$ is equal to a homeomorphism outside of some large ball. Then we can finish the proof by applying Lemma~\ref{Book}.

		Regarding the first point, note that at every $x$, we have $\partial_j A_1(x) = \partial_j A_2(x) = \partial_j f(x)$ for $ j = 2, 3$. Further, it holds that
		$$
			\begin{aligned}
				|\partial_jg_w(x) - \partial_jA_1| &\leq \partial_jw(x_2,x_3) \frac{x_1|A_2(x) - A_1(x)|}{w^2(x_2,x_3)}\eta'\Big(\frac{x_1}{w(x_2,x_3)}\Big).
			\end{aligned}
			$$
		Either $\eta'\Big(\frac{x_1}{w(x_2,x_3)}\Big) = 0$, or $x_1<w(x_2,x_3)$ implying that $|A_2(x) - A_1(x)|< 2\|Df\|_{\infty}w(x_2,x_3)$. Therefore, we have
		\begin{equation}\label{EstimateDJ}
				\begin{aligned}
				|\partial_jg_w(x) - \partial_jA_1|\leq 2\|Df\|_{\infty}|Dw|.
			\end{aligned}
		\end{equation}
		The above implies that
		\begin{equation}\label{TruEstimateDJ}
				\|\partial_jg_w(x)\|_{\infty}\leq C\|Df\|_{\infty}(1+\|Dw\|_{\infty})
		\end{equation}
		for an absolute constant $C$. By the continuity of the determinant on entries, it follows, for a sufficiently small choice of $\sigma$ and $|Dw|\leq \sigma$, that 
		\begin{equation}\label{Poker}
			\det \left( \begin{matrix}
			[\partial_2g_w]^2 & [\partial_3g_w]^2\\
			[\partial_2g_w]^3 & [\partial_3g_w]^3
		\end{matrix}\right) \geq \tfrac{1}{2}\det \left( \begin{matrix}
		[\partial_2f]^2 & [\partial_3f]^2\\
		[\partial_2f]^3 & [\partial_3f]^3
		\end{matrix}\right)>0.
		\end{equation}
		Thus we have concluded that $J_{g_w}>0$ everywhere.
		
		Instead of showing that $g_{w}$ is injective outside a large ball we define a modified $\tilde{w}$ equal to $w$ close to the origin  and show that the modified is injective outside a large ball. Let $r>0$ be given and let
		$$
			R:=r+2+\frac{\sup\{|w|\}+1}{\sigma}.
		$$
		Then we can define a smooth $\tilde{w}$ such that $\tilde{w} = w$ on $B(0,r)\subset \er^2$, $w= 1$ outside $B(0,R)$ and $|D\tilde{w}|\leq \sigma$. To achieve this let us firstly define $\tilde{\tilde{w}} := w$ on $B(0,r+1)\subset \er^2$, $w= 1$ outside $B(0,R-1)$ and extend this map onto $\er^3$ by the McShame extension. We define $\tilde{w}$ as the standard mollification of $\tilde{w}$. Since $g_w = g_{\tilde{w}}$ on $\er\times B(0,r)$ it suffices to show that $g_{\tilde{w}}$ is a diffeomorphism. Since we already know that the smooth map $g_{\tilde{w}}$ has positive Jacobian everywhere and is therefore a local diffeomorphism, it suffices to show that $g_{\tilde{w}}$ is injective outside $Q(0,R)\subset \er^3$.
		
		Since $R > |w|$ we have on the set $\big(\er^3\setminus Q_{\er^3}(0,R)\big) \cap \{x_1>1\}$ that $g_{\tilde w} = f$ and $f$ is injective. On the set $\er\times(\er^2\setminus Q_{\er^2}(0,R))$ we have, by \eqref{TheMatrix} and  \eqref{D1Posi}, that $g_{\tilde{w}}^1(x) = g_{\tilde{w}}^1(y)$ if and only if $x_1 = y_1$. Now, thanks to the fact that $\tilde{w}(x_2,x_3) =1$ for all $(x_2,x_3)\in \er^2\setminus Q_{\er^2}(0,R)$ we have
		\begin{equation}\label{Sunlight}
			\partial_2 g_w = \partial_2f \qquad \text{and}\qquad \partial_3 g_w = \partial_3f.
		\end{equation}
		The above and the injectivity of $f$ implies that the restriction of $g_{\tilde{w}}$ to hyperplanes of type $\{x\in \er^3; x_1 = c\}$ is injective. The combination of these facts imply that $g_{\tilde{w}}$ is injective on the set $\er\times(\er^2\setminus Q_{\er^2}(0,R))$. In total we see that $g_{\tilde{w}}$ is injective on $\er^3\setminus Q_{\er^3}(0,R)$. It follows from Lemma~\ref{Book} that $g_{\tilde{w}}$ is injective on $\er^3$, especially it is injective on $B(0,r)$ and so $g_{w}$ is a diffeomorphism of $B(0,r)$. On the other hand, $r>0$ was chosen arbitrarily and therefore $g_w$ is a diffeomorphism on $\er^3$. 
		
		We estimate $\partial_1g_w$ as follows;
		$$
		|\partial_1g_w| \leq  |\partial_1 A_1|+ |\partial_1 A_2| +\eta'\Big(\frac{x_1}{w(x_2,x_3)}\Big)\frac{|A_2(x) - A_1(x)|}{w(x_2,x_3)}.
		$$
		As pointed out before \eqref{EstimateDJ}, we either have $\eta' = 0$ or $|A_2 (x) - A_1(x)|\leq 2\|Df\|_{\infty}w(x_2,x_3)$. Thus we see $|\partial_1g_w|$ is bounded independently of the values of $w$. To conclude \eqref{Guessed} it suffices to consider \eqref{TruEstimateDJ}. 
	\end{proof}
	
	By composing the above lemma with a translation and a rotation, we can produce a diffeomorphism from any 2-piece affine homeomorphism on $\er^3$, where the two pieces can necessarily be separated by a hyperplane. We now explain that the above lemma can be used to smoothen an $m$-piecewise affine homeomorphism $f$ which is affine on sectors in the sense described below.

	Let us have $m$ half-planes defined as
	$$
	F_i:=\{(s\cos\theta_i, s\sin\theta_i, t); s\in[0,\infty), t\in \er\}
	$$
	where $-\pi\leq\theta_1< \theta_2 < \dots <\theta_m=\theta_0<\pi$. Further let us have an $m$-piecewise affine homeomorphism $f$ such that $f =A_i$ on the sectors between the halfplanes $F_{i-1}$ and $F_i$ (where $F_0:= F_m$) and $A_i$ is affine, i.e.,
	\begin{equation}\label{ThisIsEf}
			f(r\cos \theta, r\sin \theta,z) = A_i(r\cos \theta, r\sin \theta,z)
	\end{equation}
	for $r>0, \theta\in [\theta_{i-1},\theta_i], z\in \er.$
	
	After a rotation $x,y$-plane  we may assume that $F_i = \{x_1 = 0\}$. Up to a reflection in  the image and preimage, we may assume that $0<[\partial_1A_i]^1\leq [\partial_1A_{i+1}]^1$ and apply Lemma~\ref{FaceLem}. Returning the reflections and rotations we get a map $g_{i}$ which is diffeomorphic on the set
	$$
		\{(r\cos\theta, r\sin\theta, z); r>r_i, \theta\in (\theta_{i-1},\theta_{i+1}), z\in \er\}.
	$$
	where $r_i>\|w_i\|_{\infty}$ is chosen so that
	$$
		\begin{aligned}
			G_i:=[F_i&+B(0,\|w_i\|)]\setminus [\er e_3+ B_{\er^3}(0,\tfrac{1}{4}r_i)]\\
			&\subset \{(t\cos\theta,t\sin\theta,z), t>0, \theta\in (\theta_{i-1}, \theta_{i+1}), z\in \er\}
		\end{aligned}
	$$
	and $\{G_i; i=1,\dots m\}$ are pairwise disjoint sets. For an illustration see Figure~\ref{Fig:Edges}. A precise condition, which gives us what we require from $r_i$ is
	\begin{equation}\label{SmallForUseOfEdges}
		\arctan\Big(\frac{\|w_i\|_{\infty}}{r_i}\Big)< \frac{\min\{|\theta_i - \theta_j + 2k\pi|; 1\leq i<j\leq m; \ k\in\mathbb{Z} \} \cup \{\tfrac{\pi}{8}\}}{8},
	\end{equation} 
	
	We call $r:=\max\{r_i; i=1,2,\dots, m\}$ and we define a map $\tilde{g}$ as follows
	\begin{equation}\label{CallMeGee}
	\begin{aligned}
		\tilde{g}(t\cos\theta, t\sin\theta,z)&:= g_i(t\cos\theta, t\sin\theta,z) \\
		\text{ for } t>\tfrac{1}{4}r, \theta \in [\tfrac{1}{2}\theta_{i-1}+ \tfrac{1}{2}&\theta_i,\tfrac{1}{2}\theta_{i}+ \tfrac{1}{2}\theta_{i+1}]+2k\pi \text{ and } k\in\mathbb{Z}. 
	\end{aligned}
	\end{equation}
	Since $g_i = f = g_{i+1}$ when $t>r$, $\theta\in [\theta_i +\arctan(\frac{\|w_i\|_{\infty}}{r}), \theta_{i+1} - \arctan(\frac{\|w_i\|_{\infty}}{r})]$, each $g_i$ is diffeomorhpic close to $F_i$ and $f$ is diffeomorphic away from $F_i$ we have that $g$ is a diffeomorphism of $\er^3\setminus [\er e_3 +B_{\er^3}(0,r)]$. It obviously follows that the orientation of $g$ is the same as $f$.
	
	\begin{figure}[h]
		\begin{tikzpicture}[line cap=round,line join=round,>=triangle 45,x=0.4cm,y=0.4cm]
			\clip(-10,-4.5) rectangle (12,9);
			\draw [line width=0.6pt,color=qqqqff,fill=qqqqff,fill opacity=0.25] (0.,0.) circle (1.6cm);
			\fill[line width=0.6pt,color=ffqqqq,fill=ffqqqq,fill opacity=0.10000000149011612] (149.5,150.5) -- (150.5,149.5) -- (3.072002435381516,2.5617964472241024) -- (2.56268793018166,3.071258792824408) -- cycle;
			\fill[line width=0.6pt,color=ffqqqq,fill=ffqqqq,fill opacity=0.10000000149011612] (3.9686269665968856,0.5) -- (150.,0.5) -- (150.,-0.5) -- (3.968626966596886,-0.5) -- cycle;
			\fill[line width=0.6pt,color=ffqqqq,fill=ffqqqq,fill opacity=0.10000000149011612] (-15.748412288273487,17.197222082334424) -- (-2.5846819051442904,3.0527724201485578) -- (-2.8731285502238464,2.7830077854523907) -- (-16.121661677959636,16.89252870299879) -- cycle;
			\fill[line width=0.6pt,color=ffqqqq,fill=ffqqqq,fill opacity=0.10000000149011612] (-11.512662771101233,18.28130045873067) -- (-11.03521922655396,18.561181157258382) -- (-1.9571078384183351,3.488513853892731) -- (-2.258337021934752,3.301501763646186) -- cycle;
			\draw [line width=0.6pt] (0.,0.)-- (150.,0.);
			\draw [line width=0.6pt] (0.,0.)-- (150.,150.);
			\draw [line width=0.6pt,domain=-13.440495362177069:0.0] plot(\x,{(-0.--8.655689107426667*\x)/-5.2900333129250585});
			\draw [line width=0.6pt,domain=-13.440495362177069:0.0] plot(\x,{(-0.--6.2213564577912015*\x)/-5.814091036110472});
			\draw [line width=0.6pt,color=ffqqqq] (149.5,150.5)-- (150.5,149.5);
			\draw [line width=0.6pt,color=ffqqqq] (150.5,149.5)-- (3.072002435381516,2.5617964472241024);
			\draw [line width=0.6pt,color=ffqqqq] (2.56268793018166,3.071258792824408)-- (149.5,150.5);
			\draw [line width=0.6pt,color=ffqqqq] (3.9686269665968856,0.5)-- (150.,0.5);
			\draw [line width=0.6pt,color=ffqqqq] (150.,0.5)-- (150.,-0.5);
			\draw [line width=0.6pt,color=ffqqqq] (150.,-0.5)-- (3.968626966596886,-0.5);
			\draw [line width=0.6pt,color=ffqqqq] (-15.748412288273487,17.197222082334424)-- (-2.5846819051442904,3.0527724201485578);
			\draw [line width=0.6pt,color=ffqqqq] (-2.8731285502238464,2.7830077854523907)-- (-16.121661677959636,16.89252870299879);
			\draw [line width=0.6pt,color=ffqqqq] (-16.121661677959636,16.89252870299879)-- (-15.748412288273487,17.197222082334424);
			\draw [line width=0.6pt,color=ffqqqq] (-11.512662771101233,18.28130045873067)-- (-11.03521922655396,18.561181157258382);
			\draw [line width=0.6pt,color=ffqqqq] (-11.03521922655396,18.561181157258382)-- (-1.9571078384183351,3.488513853892731);
			\draw [line width=0.6pt,color=ffqqqq] (-2.258337021934752,3.301501763646186)-- (-11.512662771101233,18.28130045873067);
			\draw (-2,5.5) node[anchor=north west] {$B_{\er^2}(0,r)$};
		\end{tikzpicture}
		\caption{The red areas are the sets where $G_i$. The blue disk represents $B_{\er^2}(0,r)\times\er$. If $w_i$ is small compared to $r$ then then $(G_i\cap G_j) \setminus (B_{\er^2}(0,r)\times\er) = \emptyset$ and in fact have positive distance from each other. This fact remains true as  long as the ratio $\frac{w_i}{r}$ remains small enough (i.e., is not affected by linear scaling). It holds that $g_i = g_{i+1} = f$ when $\theta\in (\theta_i,\theta_{i+1})$ and $(t\cos\theta_i, t\sin\theta_i,z)$ is outside $G_i$ (and outside $B_{\er^2}(0,r)\times\er$).}
	\end{figure}\label{Fig:Edges}
	
	\begin{lemma}\label{SegLem}
		Let \( r > 0 \), \( m \in \mathbb{N} \), and let \( -\pi \leq \theta_1 < \theta_2 < \dots < \theta_m = \theta_0 < \pi \). Let \( A_1, A_2, \dots, A_m : \mathbb{R}^3 \to \mathbb{R}^3 \) be linear maps such that
		$$
		f(t\cos\theta, t\sin\theta, z) = A_i(t\cos\theta, t\sin\theta, z), \quad \text{for } t > 0,\ \theta \in [\theta_i, \theta_{i+1}],\ z \in \mathbb{R},
		$$
		defines a sense-preserving homeomorphism.
		
		Further, let \( w_i \in \mathcal{C}^{\infty}(F_i) \) be positive functions, chosen small enough that \eqref{SmallForUseOfEdges} holds and \( |D w_i| < \sigma_i \), where \( \sigma_i > 0 \) are the constants from Lemma~\ref{FaceLem} (under the preceding conventions on rotations and reflections). Let \( \tilde{g} \) be the map defined in \eqref{CallMeGee}.
		
		Then there exists a constant \( \tilde{\sigma} > 0 \) such that if \( |D w_i| < \tilde{\sigma} \), there exists a diffeomorphism \( g : \mathbb{R}^3 \to \mathbb{R}^3 \) satisfying \( g = \tilde{g} \) on \( \mathbb{R}^3 \setminus (B_{\mathbb{R}^2}(0, r) \times \mathbb{R}) \). 
		
		Moreover, there exists a constant \( C_{\ref{EdgeEstimate}} > 0 \), independent of \( r \) and the specific choice of \( w_i \) (provided \eqref{SmallForUseOfEdges} holds), such that
		\begin{equation}\label{EdgeEstimate}
			\|Dg\|_{L^\infty} \leq C_{\ref{EdgeEstimate}} \left( \|Df\|_{L^\infty} + 1 \right).
		\end{equation}
	\end{lemma}

	\begin{proof}
		\step{1}{Setup}{EmptyHandle}
		
		We use the function $\eta$ from \eqref{Beef}. Up to an isometry in the image we may assume that $\partial_3f = (0,0,\lambda)$ for some $\lambda>0$, where $f$ is the piecewise affine map from \eqref{ThisIsEf} used to construct $\tilde{g}$. Recall that we denote $\pi_3(x_1,x_2,x_3) = (x_1,x_2,0)$ and $\tilde{\pi}_3(x_1,x_2,x_3) = (x_1,x_2)$.
		
		Let us outline the rough idea of how we proceed in the following. We start by defining $g$ on $S:=\{(0,0)\}\times\er$ as $f(0,0,x_3) = x_3\partial_3 f$. For brevity of notation we write $\Phi(t,\theta, x_3) = (t\cos\theta,t\sin\theta,x_3)$. In the following we define $g$ on concentric cylindrical annuli  around $S$ called $P_1, P_2,P_3$. On the outer annulus, $P_1$ we define $g$ as a convex combination which ``flattens''  the image of the annuli so that the image of planes perpendicular to $e_3$ are mapped to planes perpendicular to $\partial_3f$. The image of the outer boundary circle of $P_2 \cap \{x_3=c\}$ is a smooth star-shaped domain. On $P_2$ we do a radial squeezing so that the image of the inner boundary circle of $P_2 \cap \{x_3=c\}$ is a circle. On $P_3$ we use an isotopy between a diffeomorphism of $S^1$ and a rotation. Then on the remaining cylinder inside we define $g$ as the rotation from the previous step. We refer the reader to Figure~\ref{Fig:Idea} for an illustration of the idea.
		\begin{figure}[h]
			\includegraphics[width=12cm]{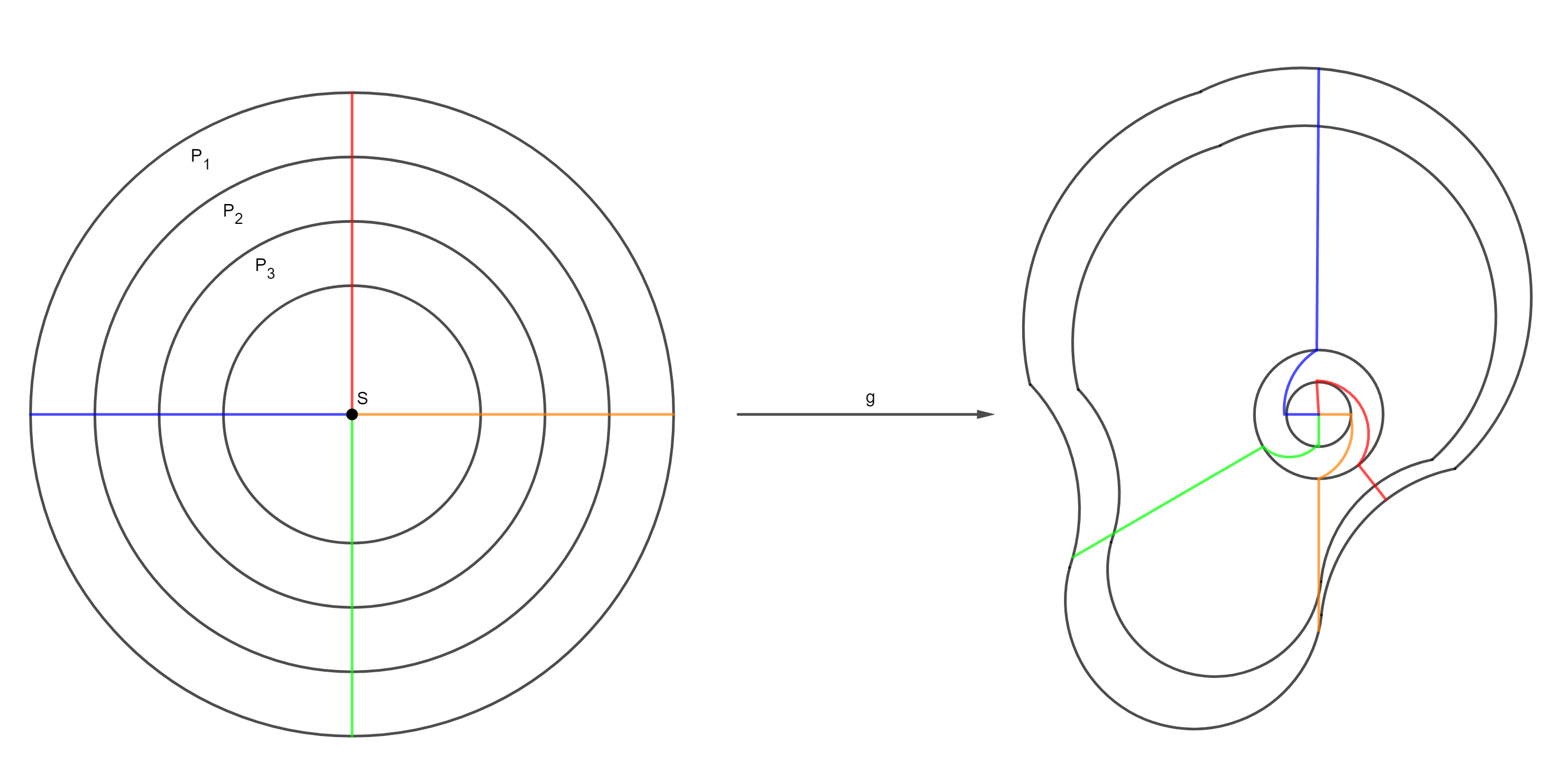}
			\caption{In the annulus $P_1$ we shift in the $e_3$ direction to so that hyperplanes disks inside $P_1$ are mapped onto hyperplanes. In $P_2$ we squeeze so that the image of a circle is a circle. In $P_3$ we `untwist' to achieve a rotation in the disks inside $P_3$ we extend as the rotation.}\label{Fig:Idea}
		\end{figure}
		
		\step{2}{Flatten images of hyperplanes pieces onto hyperplane pieces}{EmptyHandle}
		
		Let us start with the simplified case where $w_i$ are constant and satisfy \eqref{SmallForUseOfEdges}. On $P_1 := \{(t\cos\theta, t\sin\theta, x_3): x_3\in \er,t\in [\tfrac{4}{5}r,r], \theta\in \er\}$ we define
		$$	
		\begin{aligned}
			g(\Phi(t,\theta,x_3)) = & \Big[1-\eta\Big(\frac{5t-4r}{r}\Big)\Big] \Big((0,0,\lambda x_3) + \pi_3\circ \tilde{g}\big(\Phi(t,\theta,x_3)\big) \Big) \\
			&+ \eta\Big(\frac{5t-4r}{r}\Big)\tilde{g}\big(\Phi(t,\theta,x_3)\big).
		\end{aligned}
		$$
		Now we prove that the map $g$ is injective on $P_1$. Because $\partial_3f = (0,0,\lambda)$ on $\er^3$, it holds that $\partial_3\tilde{g} = (0,0,\lambda)$ on $\er^3\setminus  [B_{\er^2}(0,r/2)\times \er]$ (see \eqref{Sunlight}, we used the fact that $w_i$ are all constant). Then we easily also observe that $\partial_3g = (0,0,\lambda)$ everywhere on $P_1$. Since we may assume that $J_{\tilde{g}}>0$, we also have that
		$$
		\det\left(\begin{matrix}
			\partial_1\tilde{g}_1(x) &\partial_2\tilde{g}_1(x)\\
			\partial_1\tilde{g}_2(x) &\partial_2\tilde{g}_2(x)
		\end{matrix}\right) \geq C >0
		$$
		for some $C$ and all $x \in \er^3\setminus  [B_{\er^2}(0,r/4)\times \er]$. This implies also that
		$$
		\det\left(\begin{matrix}
			\partial_1{g}_1(x) &\partial_2{g}_1(x)\\
			\partial_1{g}_2(x) &\partial_2{g}_2(x)
		\end{matrix}\right)  \geq C >0
		$$
		on $P_1$. Then we may conclude that $J_g>0$ on $P_1$. It is not hard to observe that $g$ is injective on $\partial P_1$. In fact it suffices to prove that $g$ is injective on the the circle $\partial B_{\er^2}(0,4r/5)\times\{0\}$ which is easily observed from \eqref{D1Posi}. From the definition of $g$ we immediately see that $g$ is smooth on $P_1$ and has Jacobian bounded from below above zero and so is a diffeomorphism.
		
		\step{3}{Squeeze the smooth star-shaped domain radially into a small circle}{EmptyHandle}
		
		Call $P_2 := \Phi([\tfrac{3}{5}r,\tfrac{4}{5}r],\er,\er)$. We find a number $\rho>0$ so small that
		$$
		B_{\er^2}(0,\rho) \subset \er^2 \setminus \tilde{\pi}_3\circ \tilde{g}(\Phi([\tfrac{1}{2}r,\infty),\er,\er)).
		$$
		Then we define
		$$	
		\begin{aligned}
			g(\Phi(t,\theta,x_3)) = & (0,0,\lambda x_3)+\eta\Big(\frac{5t-3r}{r}\Big)\pi_3\circ \tilde{g}\big(\Phi(t,\theta,x_3)\big) \\
			&+ \Big(1-\eta\Big(\frac{5t-3r}{r}\Big)\Big)t\rho\frac{\pi_3\circ\tilde{g}\big(\Phi(\tfrac{3}{5}r,\theta,x_3)\big)}{|\pi_3\circ\tilde{g}\big(\Phi(\tfrac{3}{5}r,\theta,x_3)\big)|}.
		\end{aligned}
		$$
		The question of whether $g$ is injective is clearly only a 2-dimensional one, since  disjoint planes have disjoint images i.e.,
		$$
		g\big(\Phi([\tfrac{3}{5}r,\tfrac{4}{5}r],\er,a)\big) \cap g\big(\Phi([\tfrac{3}{5}r,\tfrac{4}{5}r],\er,b)\big) = \emptyset \quad \text{ if }a\neq b.
		$$
		It is easily observed that radial segments in $P_2$ are mapped to radial segments in the image with derivative bounded from below by $C(f)$ thanks to the choice of $\rho$. As stated in the previous step, \eqref{D1Posi} allows us to observe that $\partial_{\theta} g(\Phi(t,\theta, x_3))$ has a component in a direction perpendicular to $g(x)/|g(x)|$ bounded away from zero, i.e., also greater than $\rho C_f$. Therefore, we observe that
		$$
		\det\left(\begin{matrix}
			\partial_1{g}_1(x) &\partial_2{g}_1(x)\\
			\partial_1{g}_2(x) &\partial_2{g}_2(x)
		\end{matrix}\right)  \geq C >0
		$$
		on $P_2$, where $C$ depends on $f$ but not on values of $w$ or $r$ (we assume that $\frac{\rho}{r}$ is a fixed constant of the construction).

		\step{4}{`Untwist' the circle until it becomes a rotation and extend as the rotation close to the segment $S$}{EmptyHandle}
		
		It is well known that for any sense preserving diffeomorphism $h$ of the circle $S^1$ onto $S^1$ there exists an isotopy $\tilde{\alpha}: S^1\times[0,1] \to S^1$ with $\tilde{\alpha}(\zeta,1) = h(\zeta)$ and $\tilde{\alpha}(\zeta,0) = \zeta$ and that 
		\begin{equation}\label{SDRProp}
			C^{-1}\inf_{\zeta \in S^1}|h'(\zeta)|\leq |\partial_{1}\tilde{\alpha}(\tilde{\zeta},t)| \leq C\sup_{\zeta\in S^1}|h'(\zeta)|
		\end{equation}
		for all $t\in [0,1]$ and $\tilde{\zeta} \in S^1$. Further 
		\begin{equation}\label{Explicit}
			|\partial_{t}\tilde{\alpha}(\zeta,t)|\leq C_{\ref{Explicit}}
		\end{equation}
		for all $t\in [0,1]$ and $\tilde{\zeta} \in S^1$. A proof and explicit construction of this can be found in \cite[Section~3.1]{MP} but it is not hard to construct the map by hand using a smooth convex combination in polar coordinates. For us
		$$
		h(\cos\theta, \sin\theta) = \frac{\tilde{\pi}_3\circ\tilde{g}\big(\Phi(\tfrac{3}{5}r,\theta,x_3)\big)}{|\tilde{\pi}_3\circ\tilde{g}\big(\Phi(\tfrac{3}{5}r,\theta,x_3)\big)|} = \frac{\tilde{\pi}_3\circ g(\Phi(\tfrac{3}{5}r,\theta,x_3))}{|\tilde{\pi}_3\circ g(\Phi(\tfrac{3}{5}r,\theta,x_3))|}
		$$
		and so let $\alpha$ be the corresponding isotopy.

		Call $P_3 := \Phi([\tfrac{2}{5}r,\tfrac{3}{5}r],\er,\er)$. We define 
		$$
		\begin{aligned}
			g(\Phi(t,\theta,x_3)) = & (0,0,\lambda x_3) +t\rho \alpha\Big(\theta,\frac{5t-2r}{r}\Big).
		\end{aligned}
		$$
		Since $\alpha$ is a diffeomorphism, we have that $g$ is injective on $P_3$. Thus, we observe that $g$ is injective on $P_1\cup P_2\cup P_3$. Further, because $\alpha$ is smooth, $g$ is smooth and can be smoothly extended as 
		$$
		\begin{aligned}
			g(\Phi(t,\theta,x_3)) = & (0,0,\lambda x_3) + t\rho\alpha(\theta, 0)
		\end{aligned}
		$$
		for $t\in [0.\tfrac{r}{4}]$. Notice that this agrees with our initial definition of $g$ on $S$.
		
		By considering the definition of $g$ on each part $P_1,P_2,P_3$ separately, we observe that
		\begin{equation}\label{SegLemDerEst}
			\begin{aligned}
				\partial_{3} &g(\Phi(x_1,t,\phi)) = (0,0,\lambda) \leq \|Df\|_{L^\infty},\\
				|\partial_t &g(\Phi(t,\theta, x_3))| \leq  C\frac{1}{r}\sup_{\theta\in \er}|\pi_3\circ\tilde{g}\big(\Phi(r,\theta, 0)\big)| \\
				&\ \ +C\frac{1}{r}\sup_{\Phi(t,\theta,0 )\in P_1} \Big|\Big[\tilde{g}\big(\Phi(t,\theta, 0)\big) - \pi_3\circ\tilde{g}\big(\Phi(t,\theta, 0)\big) \Big]\Big|\\
				&\ \ + \frac{\rho}{r}C_{\ref{Explicit}}\\
				&\leq C\|Df\|_{L^\infty}, \text{ and}\\
				|\partial_v &g(\Phi(t,\theta, x_3))| \leq C\|Df\|_{L^\infty}
			\end{aligned}
		\end{equation}
		where by $v = v(\theta)$ we denote the vector $(\sin\theta, -\cos\theta)$. These estimates suffice to give \eqref{EdgeEstimate} since we the ratio $\rho/r$ is a fixed constant depending on $f$. We have proved also that $J_g\geq C>0$ implying that, thanks to Lemma~\ref{Book}, $g$ is a diffeomorphism since we know that $g$ is injective far away from $S$ by Lemma~\ref{FaceLem}.
		
		\step{5}{The construction works also for non-constant $w_i$}{EmptyHandle}
		
		Let us now assume that the functions $w_i$ are not constant. The derivative $Dg$ depends continuously on $Dw_i$ and so we find  a $\tilde{\sigma}>0$ so small that the Jacobian of $g$ remains bounded away from 0 as soon as $|Dw|<\tilde{\sigma}$. The conclusion now mirrors that above for constant $w_i$.
	\end{proof}
	
	Let us now prove that the construction from Lemma~\ref{SegLem} can be modified to allow us to modify the parameter $r$ with respect to $x_3$ while still giving a diffeomorphic extension and the same estimates on $\|Dg\|_{L^\infty}$. 
	
	\begin{corollary}\label{SegFix}
		Let $f$ be the piecewise affine map from \eqref{ThisIsEf}. There exists a $\xi>0$ such that the following holds. Let $r\in \mathcal{C}^{\infty}(\er)$ be a positive function such that $|r'|\leq \xi$ and let $w_i\in \mathcal{C}^{\infty}(F_i)$ satisfy $\|Dw_i\|_{L^\infty}\leq \min\{\sigma_i, \tilde{\sigma}\}$ (the numbers from Lemma~\ref{FaceLem} and Lemma~\ref{SegLem}), let also $w_i$ satisfy the following version of \eqref{SmallForUseOfEdges}
		\begin{equation}\label{SmallForUseOfEdges2}
			\arctan\Big(\frac{w_i(x_1,x_2,x_3)}{r(x_3)}\Big)< \frac{\min\{|\theta_i - \theta_j + 2k\pi| \} \cup \{\tfrac{\pi}{8}\}}{8}
		\end{equation}
		for all $(x_1,x_2,x_3)\in \er^3$ such that $x_1^2+x_2^2 \geq r^2(x_3)/16$. Then the map $\tilde{g}$ from  \eqref{CallMeGee} is a diffeomorphism of $\er^3\setminus \{x\in \er^3; x^2_1+x^2_2\leq r^2(x_3)/4\}$ and by construction in Lemma~\ref{SegLem} we get a diffeomorphism $g$ equaling $\tilde{g}$ on $\er^3\setminus \{x\in \er^3; x^2_1+x^2_2\leq r^2(x_3)\}$ and satisfying \eqref{EdgeEstimate}.
	\end{corollary}
	\begin{proof}
		It suffices to observe that $Dg$ depends continuously on $r'$ and the estimates are independent of $r$ (they depend on the ratio $\frac{\rho}{r}$ which we keep constant).
	\end{proof}
	
	\begin{lemma}\label{VertLem}
		Let $R>0$ and let $T_1,T_2,\dots T_k$ be 3-simplexes such that $B_{\er^3}(0,2R)\subset\bigcup_{i=1}^kT_i =:G$ and $(0,0,0)$ is a common vertex of all $T_i$. Let $A_i$ be linear maps and
		$$
		f(x) = A_i(x)\quad x\in T_i
		$$
		be a sense-preserving piecewise affine homeomorphism.
		
		Let the 2-subsimplexes of $T_1, \dots, T_k$ containing $(0,0,0)$ be indexed as $F_1,F_2,\dots, F_n$ and the 1-subsimplexes containing $(0,0,0)$ be indexed as $S_1,S_2,\dots, S_m$.
		
		Then there exists an $r_0 = c(\{S_i\}_{i=1}^m)R$ such that the sets $[S_i+B(0,r_0)]\setminus B(0,R/2)$ are pairwise disjoint. For each $S_i$, $i=1,\dots, m$, let us have numbers  $0<r_i \leq r_0$ and for each $F_j$ let us have numbers $w_j$ so small, that (after an appropriate rotation in the preimage) they satisfy \eqref{SmallForUseOfEdges}.
		
		Let $\hat{g} = \hat{g}_{\{r_i\}_{i=1}^m, \{w_j\}_{j=1}^n}$ be the diffeomorphism of $B(0,2R)\setminus B(0,R/2)$ constructed using: - Lemma~\ref{FaceLem} in neighborhoods of each $ F_j \setminus [S_i + B(0, r_i)]$, and
		- Lemma~\ref{SegLem} in neighborhoods of each $ S_i$.
		
		Then there exists an $\alpha, \beta>0$ such that if
		$$
		\frac{w_j}{R} \leq \alpha \quad \text{and} \quad \frac{r_i}{R}\leq \beta,
		$$
		there exists a diffeomorphism $ g: B(0,2R) \to \mathbb{R}^3$ such that
		$$
		g = \hat{g} \quad \text{on } B(0,2R) \setminus B(0,R).
		$$
	\end{lemma}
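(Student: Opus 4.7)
After rescaling to $R=1$, my strategy is to construct $g$ in three concentric zones that glue with $C^\infty$ regularity. On the outer annulus $B(0,2)\setminus B(0,1)$ I keep $g=\hat g$. On the buffer annulus $B(0,1)\setminus B(0,1/2)$ I rebuild the face- and edge-smoothing using Lemma~\ref{FaceLem} with a variable slab width $w_j(x)=\phi(|x|)w_j$ and Corollary~\ref{SegFix} with a variable edge radius $r_i(x_3)=\phi(x_3)r_i$, where $\phi\in C^\infty([0,\infty),(0,\infty))$ is a universal profile with $\phi\equiv 1$ on $[3/4,\infty)$ and $\phi(t)=2t$ on $[0,5/8]$. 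The identity $\phi\equiv 1$ near the outer boundary makes the gluing across $\partial B(0,1)$ automatic, while on the inner portion of the buffer the identity $\phi(t)=2t$ forces the smoothing to be homogeneous of degree $1$ (a cone from $0$), so it extends naturally into $B(0,1/2)\setminus\{0\}$ as the same cone. The smoothing-lemma hypotheses $|Dw_j|\le\tilde\sigma$ and $|r_i'|\le\xi$ reduce to $|\phi'|w_j\le\tilde\sigma$ and $|\phi'|r_i\le\xi$, which hold provided $\alpha,\beta$ are small enough compared to $\tilde\sigma/\sup|\phi'|$ and $\xi/\sup|\phi'|$.

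After these two zones, $g$ is defined and is a diffeomorphism on $B(0,2)\setminus\{0\}$, and the remaining task is to fill a smooth diffeomorphism on $B(0,1/4)$ matching the surrounding cone to all orders at $\partial B(0,1/4)$. Let $\mu:=g|_{\partial B(0,1/4)}$ and $\Sigma:=g(\partial B(0,1/4))$, enclosing a region $V$. By smallness of $\alpha,\beta$, $\Sigma$ is a $C^0$-small perturbation of the polytopal sphere $f(\partial B(0,1/4))$, which is strictly star-shaped about $f(0)=0$, so $\Sigma$ itself is smoothly star-shaped and $\Sigma=\{\rho_\Sigma(\omega)\omega:\omega\in S^2\}$ with $\rho_\Sigma\in C^\infty(S^2,(0,\infty))$. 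By the Alexander-Schoenflies theorem in $\er^3$, there exists a $C^\infty$ diffeomorphism $\Phi_0:\overline{B(0,1/4)}\to\overline V$, which, via a standard collar deformation, can be arranged to coincide with the radial stretching $\Phi_0(y)=4\rho_\Sigma(y/|y|)y$ on the annular collar $\overline{B(0,1/4)}\setminus B(0,1/6)$. Then $\Phi_0^{-1}\circ\mu$ is a sense-preserving self-diffeomorphism of $\partial B(0,1/4)$; Corollary~\ref{VertThm} extends it to a smooth self-diffeomorphism $\Theta$ of $\overline{B(0,1/4)}$, and the composition $\Phi_0\circ\Theta$ is the desired inner-ball extension.

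The principal obstacle is the $C^\infty$ matching at $\partial B(0,1/4)$. Using the $1$-homogeneity of $\Phi_0$ on the collar together with the fact that $\Theta$ is the cone extension of $\Phi_0^{-1}\circ\mu$ on the outer third of $\overline{B(0,1/4)}$ (by the construction in the proof of Corollary~\ref{VertThm}), one computes $\Phi_0\circ\Theta(y)=4|y|\,\mu(y/(4|y|))$ for $y$ in that outer third, which is exactly the formula for the surrounding cone; thus the two agree on the whole annular collar, and the global $g$ is $C^\infty$ across $\partial B(0,1/4)$. Smoothness at $0$ is immediate because $\Phi_0$ and $\Theta$ are smooth diffeomorphisms of $\overline{B(0,1/4)}$. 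Injectivity of $g$ on $B(0,2)$ then follows from injectivity on each zone together with the degree-theoretic criterion of Lemma~\ref{Book}, and un-rescaling yields the desired diffeomorphism on $B(0,2R)$.
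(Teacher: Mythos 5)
Your route differs from the paper's in a meaningful way. The paper never replaces $\hat g$ on a buffer annulus: it keeps $\hat g$ everywhere on $B(0,2R)\setminus B(0,R/2)$, proves in its Step~1 that the radial projection $x\mapsto \hat g(x)/|\hat g(x)|$ is a local diffeomorphism of $\partial B(0,t)$ onto $S^2$ by estimating the angular subdeterminant $\Delta$ of $D\hat g$ (this is exactly where the hypotheses $w_j/R\le\alpha$, $r_i/R\le\beta$ are used, together with Theorem~\ref{AlgTop} to upgrade local to global), then in Step~2 interpolates $\hat g$ with $\rho|x|\,\hat g(x)/|\hat g(x)|$, and in Step~3 uses Smale's Theorem~\ref{Smale} to isotope down to a linear map. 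Your alternative — rebuilding the face/edge smoothing with $\phi(|x|)$-scaled widths so that $g$ becomes $1$-homogeneous on $B(0,1/2)\setminus\{0\}$, and then filling the cone tip via Alexander--Schoenflies plus Corollary~\ref{VertThm} — is inventive, and the $C^\infty$-matching computation $\Phi_0\circ\Theta(y)=4|y|\,\mu(y/(4|y|))$ on the outer third does check out under the $1$-homogeneity of $\Phi_0$ on the collar.

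However, there is a genuine gap at the point where you assert that $\Sigma=g(\partial B(0,1/4))$ is smoothly star-shaped ``by $C^0$-smallness.'' A $C^0$-small perturbation of a star-shaped polytopal sphere need not be star-shaped at all (the perturbed boundary may oscillate radially), so that inference is invalid. What you actually need is that the angular projection $x\mapsto g(x)/|g(x)|$ is a diffeomorphism of $\partial B(0,t)$ onto $S^2$; given the $1$-homogeneous cone structure this is equivalent to $g$ being an injective local diffeomorphism of the annulus, and neither of those facts follows merely from $|Dw_j|\le\tilde\sigma$ and $|r'|\le\xi$ (which only guarantee that each individual face/edge smoothing is a local diffeomorphism; they say nothing about global injectivity near the vertex where many faces and edges converge). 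The paper's Step~1 is precisely the missing ingredient: it uses the smallness $w_j/R\le\alpha$, $r_i/R\le\beta$ to show that the angular Jacobian $\Delta$ of $\hat g$ stays positive, and then invokes Theorem~\ref{AlgTop} to pass from local to global injectivity on spheres. Notice also that in your version the $\alpha,\beta$ bounds are consumed only by the derivative constraints $|\phi'|w_j\le\tilde\sigma$, $|\phi'|r_i\le\xi$ — a different (and weaker) use than the paper's — which is a symptom of the omitted global argument. If you insert a Step-1-style estimate and degree argument to justify that the cone map restricts to sphere diffeomorphisms, the rest of your plan goes through; without it, the star-shapedness of $\Sigma$ and the applicability of Alexander--Schoenflies are not established.
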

	
	\begin{proof}
		\step{1}{Projection to the sphere is injective}{arg3a}
		Let us prove that the map $x \to \frac{\hat{g}(x)}{|\hat{g}(x)|}$ is injective on $\partial B(0,t)$ for $t\in [R/2,2R]$.  The claim is easy to observe when we replace $\hat{g}$ with $f$ since $f$ is a piecewise affine homeomorphism. This means (since we assume that $f$ is sense-preserving) that $\det Df\geq C>0$, a.e. on $G$. For each point $x$ in the preimage we consider an orthonormal positively-oriented basis of $\er^3$ $\{\frac{x}{|x|},u_2,u_3\}$. For each point $f(x)$ in the image we consider an orthonormal positively-oriented basis of $\er^3$ $\{\frac{f(x)}{|f(x)|},v_2,v_3\}$. We calculate $J_f$ with respect to this basis. Since $\partial_{\frac{x}{|x|}}f = \frac{f(x)}{|x|}$ we get that the subdeterminant of $Df$ with respect to $\{u_2,u_3\}\times\{v_2,v_3\}$ is necessarily positive and by linearity is independent of $|x|$. This implies that the map $x\to \frac{f(tx)}{|f(tx)|}$ defined on $\partial B(0,1)$ is a positively oriented bi-Lipschitz map of the unit sphere onto itself (and is independent of the parameter $t$).

		We want to prove the same estimate for $\hat{g}$ as we have for $f$, i.e., the subdeterminant of $D\hat{g}$ with respect to $\{u_2,u_3\}\times\{v_2,v_3\}$ is positive for every $t$. We may assume that $J_{\hat{g}} \geq C>0$ on $B(0,2R)\setminus B(0,R/2)$. On most of $\partial B(0,t)$ we are far away from any $F_j$ and so $\hat{g} = f$ (see \eqref{NoChange}) and therefore we have the estimate immediately. Let us assume that we are in a $w_j$ neighbourhood of $F_j$ (but not close to any $S_i$). By basic geometry and \eqref{Guessed} (since we have $w_j$ constant we may assume that $\sigma = 0$), we have that 
		$$
		\Big|\partial_{\frac{x}{|x|}}\hat{g} - \partial_{\frac{\pi_{F_j}(x)}{|(x)|}}\hat{g}\Big|  = \Big|\partial_{\frac{x - \pi_{F_j}(x)}{|x|}}\hat{g} \Big|\leq \frac{w_j}{R}|\partial_v\hat{g}| \leq C\frac{w_j}{R}\|Df\|_{L^\infty},
		$$
		where $\pi_{F_j}$ is the orthogonal projection of $\er^3$ onto $F_j$ and $v$ is a unit vector perpendicular to $F_j$. We calculate $J_{\hat{g}}$ by an expansion of the row corresponding to the component parallel to $\hat{g}(x)$ as
		$$
		J_{\hat{g}} \leq |\partial_{\frac{x}{|x|}}\hat{g}|\Lambda +\big|\partial_{\frac{x}{|x|}}\hat{g} - \partial_{\frac{\pi_{F_j}(x)}{|(x)|}}\hat{g}\big|\|Df\|_{L^\infty}^2
		$$
		where by $\Lambda$ we denote the subdeterminant of $D\hat{g}$ with respect to $\{u_2,u_3\}\times\{v_2,v_3\}$. This means that
		$$
		\Lambda \geq |\partial_{\frac{x}{|x|}}\hat{g}|^{-1}\Big( J_{\hat{g}} - C\frac{w_j}{R}\|Df\|_{L^\infty}^3\Big).
		$$
		Assuming that the ratio $\frac{w_j}{R}$ is sufficiently small we have that $\Lambda$ is positive bounded away from zero.
		
		Using the same argument on neighborhoods of $S_i$ we observe that also here $\Lambda$ is positive bounded away from zero as soon as $\frac{r_i}{R}$ is sufficiently small. This implies that $\frac{\hat{g}}{|\hat{g}|}$ is a local homeomorphism of $\partial B(0,t)$ onto $\partial B(0,1)$, for $t\in [R/2,2R]$ and therefore, by Theorem~\ref{AlgTop}, $\hat{g}$ is a homeomorphism. 
		
		\step{2}{An injective convex combination of $\hat{g}$ and its projection onto a sphere}{arg3}
		It follows from step~\ref{arg3a} that the image of $B(0,\frac{3}{4}R)$ under $\hat{g}$ is a star-shaped domain containing the origin and having smooth boundary. This means we find a $\rho=C(f)r>0$ such that $B(0,2\rho) \subset\hat{g}(B(0,\frac{3}{4}R))$. We define $g$ on $B(0,R)\setminus B(0,\frac{3}{4}R)$ as follows
		$$
		g(x) = \eta\Big(\frac{4|x|-3r}{r}\Big)\hat{g}(x) + \Big(1- \eta\Big(\frac{4|x|-3r}{r}\Big)\Big)\rho|x|\frac{\hat{g}(x)}{|\hat{g}(x)|}.
		$$
		The injectivity of this map follows from the injectivity proved in the previous step and the choice of $\rho$, which guarantees that
		$$
		\Big\langle \partial_{\frac{x}{|x|}}\hat{g}, \frac{\hat{g}(x)}{|\hat{g}(x)|} \Big\rangle> 0.
		$$
		It follows that $J_g$ is bounded from below away from zero.
		
		\step{3}{Reduce to a linear map on $B(0,\tfrac{3}{4}R)\setminus B(0,\tfrac{1}{2}R)$}{arg3}
		We have that the map $\frac{4}{3\rho}g(\frac{4}{3R}x)$ is a sense-preserving diffeomorphism of $S^2$ onto $S^2$. Then we may apply Theorem~\ref{Smale}, to get a isotopy $\Psi:S^2\times[0,1] \to S^2$ such that $\Psi(\cdot, t)$ is a diffeomorphism of $S^2$ onto $S^2$, $\Psi(x, 1) = \frac{4}{3\rho}g(\frac{4}{3R}x)$ and $\Psi(x, 1) = x$. Then we may define
		$$
		g(x):= \rho|x|\Psi\Big(x,\eta\Big(\frac{4|x|-2R}{R}\Big)\Big)
		$$
		for $x\in B(0,\tfrac{3}{4}R)$. One may easily observe that $g(x) = \rho x$ on $B(0,R/2)$. It is easy to observe that $g$ is now a diffeomorphic extension of $\hat{g}$ restricted to $B(0,2R)\setminus B(0,R)$.
	\end{proof}
	
	\subsection{Proof of Theorem~\ref{Core}}
	Without loss of generality we may assume that $f$ is sense preserving (if not consider a reflection).
	
	Let us first conduct the proof in a simplified case, i.e., where $f$ is a \textit{finitely} piecewise affine homeomorhpism. We are able to resolve this case by choosing a constant $w_j$ for each 2-subsimplex $F_j$ and  a constant $r_i$ for each 1-subsimplex $S_i$. Let the vertexes of the simplicial complex, i.e., its 0-subsimplexes  be indexed by $V_k$. We start by choosing a preliminary $\tilde{R_k}$  for each $V_k$ satisfying only the requirement that $B(V_k, \tilde{R}_k)$ are pairwise disjoint and that if $T_m\cap B(V_k, \tilde{R}_k) \neq \emptyset$, then $V_k\in T_m$ for each simplex $T_m$. Now we may choose $\tilde{r}_i,\tilde{w}_j$ so small that the hypothesis of Lemma~\ref{SegLem} and Lemma~\ref{VertLem} are satisfied. Now we use Lemma~\ref{FaceLem}, Lemma~\ref{SegLem} and Lemma~\ref{VertLem} to construct a diffeomorphism $\tilde{g}$ equaling $f$ whenever we are outside the set
	$$
	E_1:=\bigcup_kB(V_k,\tilde{R}_k) \cup \bigcup_i(S_i + B(0,\tilde{r}_i)) \cup \bigcup_j(F_j + B(0,\tilde{w}_j)).
	$$
	We need to refine this map, since we have no estimate on $|D\tilde{g}|$ on $B(V_k, \tilde{R}_k)$. Let us denote $M:= \|D\tilde{g}\|_{L^\infty}$. We easily observe that $M$ is a function of the ratios $\frac{r_i}{R}$, $\frac{w_j}{R}$ and $\frac{w_j}{r_i}$, i.e., if we scale all of these simultaneously by some $\lambda \in (0,1]$ we get a diffeomorphism $g_{\lambda}$ each time also satisfying the estimate 
	\begin{equation}\label{Napoli}
		\|Dg_{\lambda}\|_{L^{\infty}}= M.
	\end{equation}
	Let us denote by $K:= \|D\tilde{g}^{-1}\|_{L^\infty}$, arguing in the same way we have:
	\begin{equation}\label{Napoli1}
		\|Dg^{-1}_{\lambda}\|_{L^{\infty}}= K.
	\end{equation}
	
	On the other hand the measure of the set $E_{\lambda} = \{g_{\lambda}\neq f\}$ tends to zero as $\lambda \to 0^+$. By this argument we estimate that
	$$
	\|Df-Dg_{\lambda}\|_{L^p}^p\leq M^p\mathcal{L}^3(E_{\lambda}),
	$$
	for each $p\in[1,\infty)$.
	
	We may estimate the error for the inverse in the same way, i.e., by the uniform Lipschitz quality of all $g_{\lambda}$ we have that 
	$$
	\mathcal{L}^3(g_{\lambda}(E_{\lambda}))\leq M^3\mathcal{L}^3(E_{\lambda}).
	$$
	Further because $|Dg^{-1}_{\lambda}|$ is bounded independently of $\lambda$ we have that 
	$$
	\|Df^{-1}-Dg^{-1}_{\lambda}\|_{L^p} \to 0 \text{ as } \lambda \to 0 
	$$
	for each $p\in[1,\infty)$.
	
	Now we consider the general case where we have a locally finite simplicial complex. Let us choose an increasing sets sequence $A_1\subset A_2\subset \dots$, such that each $A_n$ is the finite union of simplexes $\{T_k\}_{k=1}^{N_n}$ and such that if $T_{k}\cap \partial A_n \neq \emptyset$ then $T_k\cap \partial A_{n+1} = \emptyset$. By $n$-border simplexes we mean those simplexes in $A_{n+1}\setminus A_n$ such that $T_{k}\cap \partial A_n \neq \emptyset$. By $H_n$ we denote the set of simplexes in $A_n\setminus A_{n-1}$ and the $n$-border simplexes. For each $n$ we choose a value $\tilde{\lambda}_n$ so small that whenever we construct $\tilde{g}_n$ with $0<\lambda_n\leq \tilde{\lambda}_n$using the above process, we have
	$$
	\|f - \tilde{g}_n\|_{W^{1,p}} + \|f^{-1} - \tilde{g}^{-1}_n\|_{W^{1,q}}< 2^{-n}\epsilon,
	$$
	where the norms are taken over the interior of the union of simplexes of $H_n$ and the intersection of their images in $f$ and $\tilde{g}_n$ respectively.
	
	In order to connect up the various $\tilde{g}_n$ into one diffeomorphism there is one more step that we need to conduct, namely on $n$-border simplexes we have defined $\tilde{g}_n$ and $\tilde{g}_{n+1}$. The idea is to allow $w_j$ and $r_i$ to smoothly change between the two values on the $F_j$ and $S_i$ respectively. On the other hand, if $F_j$ or $S_i$ are too short  this may not be possible since in order for Lemma~\ref{FaceLem}, Lemma~\ref{SegLem} and Corollary~\ref{SegFix} to hold we need $|Dw_j|$ and $|r'|$ to be very small. On the other hand, now knowing the length of the segments $S_i$ we may now choose (assume $\tilde{\tilde{\lambda}}_2:= \tilde{\lambda}_2$) $\lambda_n< \tilde{\tilde{\lambda}}_n$ and $\tilde{\tilde{\lambda}}_{n+1}< \tilde{\lambda}_{n+1}$ so that the difference in respective values (for any choice of $\lambda_{n+1}<\tilde{\tilde{\lambda}}_{n+1}$) of $w_j^n, w_j^{n+1}$ and of $r_i^n, r_i^{n+1}$ are so small that there exist smooth functions $w_j$ and $r_i$ having the chosen values for $H_n$ near $A_n$ and having the values chosen for $H_{n+1}$ further away from $A_n$, while $|Dw_i|$ and $|r'|$ remain as small as required.
	
	By the preceding lemmas (and Corollary~\ref{SegFix}) we construct a diffeomorphism $g$. By the choices of $\lambda_n<\tilde{\lambda}_n$ we have that
	$$
	\|f-g\|_{W^{1,p}} +  \|f^{-1} - g^{-1}\|_{W^{1,q}}< \epsilon,
	$$ 
	where the norms are taken over $\Omega$ and $f(\Omega)$ respectively.
	
	\qed

	\subsection{Proof of Theorem~\ref{main}}	
	
	By \eqref{Napoli} and \eqref{Napoli1}, for each $f$, there exists a $C(f)$ such that
	$$
	\|Df-Dg_n\|_{L^{\infty}(H_n)} + \|Df^{-1} - Dg_n^{-1}\|_{L^{\infty}(f(H_n))} \leq C(f).
	$$
	Then using the fact that $\mathcal{L}^3(E_{\lambda_n, n}) \to 0$ as $\lambda_n \to 0$, where $E_{\lambda_n} = H_n\cap \{g_{\lambda_n,n}\neq f\}$, $f$ is bi-Lipschitz on $H_n$ and Lemma~\ref{Rozumny} we see that
	$$
	\|Df-Dg_n\|_{X(H_n)} + \|Df^{-1} - Dg_n^{-1}\|_{Y(f(H_n))} \to 0
	$$
	as $\lambda_n\to 0$. The estimates for the function values are even easier.
	
	\section{Remark on the proof for higher dimensions}
	The proof in dimensions $d=4$ follows almost exactly the same way as the case in $d=3$, since the arguments in Lemma~\ref{FaceLem} are independent on the number of dimensions of the hyperplane $\{x_1 = 0\}$ and the proof of Lemma~\ref{SegLem} does not change when we replace the segment in $\er^3$ with a 2-dimensional plane in $\er^4$. The only notable difference is that instead of Theorem~\ref{Smale} we use the following generalization:
	\begin{thm}\label{Tomasova}
		For $d=2, 3, 4, 6, 12, 56, 61$ and every $\alpha_0$, a sense-preserving $\mathcal{C}^{\infty}$-diffeomorphism of $S^{d-1} = \partial B_{\er^d}(0,1)$ onto itself there exists an $\alpha\in \mathcal{C}^{\infty}([0,1]\times S^{d-1}, S^{d-1})$ such that $\alpha(t,\cdot)$ is a diffeomorphism of $S^{d-1}$ for each $t$, $\alpha(0,\cdot) = \alpha_0(\cdot)$ and $\alpha(1,\cdot) = \id(\cdot)$. 
	\end{thm}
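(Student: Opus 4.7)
The plan is to observe that the statement is equivalent to the triviality of $\pi_0(\mathrm{Diff}^+(S^{d-1}))$ for each of the listed values of $d$, and then to dispatch each case by citing the appropriate piece of the literature. A smooth homotopy from $\alpha_0$ to $\id$ inside $\mathrm{Diff}^+(S^{d-1})$ is exactly the map $\alpha$ we are asked to construct; promoting a continuous path in $\mathrm{Diff}^+$ to a smooth one and arranging it to be constant near the endpoints (as we used after Theorem~\ref{Smale} in Corollary~\ref{VertThm}) is a standard bump-function argument that I would perform once and for all.

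First I would handle the three low-dimensional cases by direct reference. For $d=2$, an orientation-preserving diffeomorphism of $S^1$ lifts to a strictly increasing smooth map of $\mathbb R$ commuting with unit translation, and its affine interpolation with the identity descends to the desired isotopy. For $d=3$ this is literally Theorem~\ref{Smale}, so nothing further is needed. For $d=4$ the statement is Hatcher's theorem on the homotopy type of $\mathrm{Diff}^+(S^3)$, cited as \cite{Hatcher50}.

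For the remaining dimensions $d \in \{6,12,56,61\}$ one has $d-1 \geq 5$, and I would invoke Cerf's pseudo-isotopy theorem to obtain the identification
$$
\pi_0(\mathrm{Diff}^+(S^{d-1})) \cong \Theta_d,
$$
where $\Theta_d$ is the Kervaire--Milnor group of oriented homotopy $d$-spheres under connected sum. It then suffices to show that $\Theta_d$ vanishes for each of the four listed dimensions. Via the Kervaire--Milnor exact sequence the vanishing reduces to that of specific entries in the stable stems, a computation completed up through the $60$-stem; the conclusion $\Theta_d=0$ for $d \in \{6,12,56,61\}$ is read off from the tables in \cite{StableHomotopyGroups}.

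The hard part is not in the argument itself but in tracing the literature carefully: one must verify that the cited results produce an isotopy through \emph{smooth} diffeomorphisms rather than merely a pseudo-isotopy, a concordance, or a homeomorphism-level statement, and that Cerf's theorem applies unconditionally in each of the dimensions at hand. The most delicate step is the citation for $d=61$, which relies on very recent computations in the stable stems; once $\Theta_{61}=0$ is accepted, the four high-dimensional cases are handled by a single uniform argument, and the smoothing step described in the first paragraph completes the proof.
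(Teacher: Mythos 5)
Your proof takes essentially the same route as the paper: both reduce the statement to the connectivity of $\mathrm{Diff}^+(S^{d-1})$ and then dispatch the listed dimensions by citation (Smale for $d=3$, Hatcher for $d=4$, and the tables of stable stems for $d\geq 6$). You are in fact more explicit than the paper in two useful places: you spell out the reduction for $d\geq 6$ (Cerf's pseudo-isotopy theorem yielding $\pi_0(\mathrm{Diff}^+(S^{d-1}))\cong\Theta_d$ for $d-1\geq 5$, then the Kervaire--Milnor sequence and the tables in \cite{StableHomotopyGroups} to check $\Theta_d=0$), and you note that the continuous path in $\mathrm{Diff}^+$ must be upgraded to a $\mathcal{C}^\infty$ isotopy constant near the endpoints, which the paper uses implicitly (see the proof of Corollary~\ref{VertThm}) but does not justify at this point. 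One small correction: for $d=4$ you should cite \cite{H} (Hatcher's proof that $\mathrm{Diff}(S^3)\simeq O(4)$), not the survey slides \cite{Hatcher50}, which the paper reserves for the high-dimensional cases. Otherwise the argument is correct and your flagged caveats — that the cited results must produce genuine smooth isotopies rather than pseudo-isotopies or concordances, and that the $d=61$ case rests on the recent computation of the $60$-stem — are exactly the right things to be careful about.
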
	
	This theorem is equivalent to stating that the group of sense-preserving diffeomorphisms of the sphere $S^{d-1}$ is connected for $d=2, 3, 4, 6, 12, 56, 61.$ As mentioned above for $d=3$ the theorem was proven by Smale in \cite[Theorem A]{S}. For $d=4$ it was proven by Hatcher in \cite{H} and the cases of $d \geq 6$ can be found in \cite[Section II]{Hatcher50} and \cite[Table 1 and 2]{StableHomotopyGroups}, where it can also be found that the theorem fails for all other $d \leq 90$ outside $d=5.$ It is still an open problem for $d=5$ and whether it holds for any $d>90.$
	
	Since the method of our proof is contingent on finding such an isotopy $\alpha$, the obstacle for generalizing to higher dimension than $d=4$, is that we do not know that the space of sense-preserving diffeomorphisms from the sphere onto itself is connected for dimension $5$ and in fact is false for dimensions other than the ones mentioned in Theorem~\ref{Tomasova}.
	
	\subsection*{Acknowledgement}
	The authors would like to thank the anonymous referee for reading the manuscript with great care and interest. We are especially grateful for their insightful comments and corrections, which helped eliminate errors and improve the accuracy and clarity of the text.

\end{document}